\theoremstyle{plain}
\newtheorem{theorem}[subsection]{Theorem}
\newtheorem{proposition}[subsection]{Proposition}
\newtheorem{lemma}[subsection]{Lemma}
\newtheorem{corollary}[subsection]{Corollary}
\newtheorem{definition}[subsection]{Definition}
\theoremstyle{definition}
\newtheorem{aforisma}[subsection]{}
\newtheorem{remark}[subsection]{Remark}
\providecommand{\keywords}[1]
{
  \small	
  \textbf{\textit{Keywords--}} #1
}
\title{Elementary schematic geometry}
\author{Thiago Alexandre \\
        \small University of S\~{a}o Paulo
}
\begin{document}

\maketitle

\begin{abstract}
  This paper exposes the language of geometric contexts and elementary schemes, which is a functorial formalism to study categories of geometric objects such as schemes, topological manifolds, differential manifolds, analytic manifolds, etc. Through the theory of Grothendieck topologies and geometric contexts, geometry turns out to be the study of local properties which are stable under base change.

\end{abstract} \hspace{10pt}

\keywords{Categorical geometry, Geometric contexts, Functor of points, Schemes, Algebraic Geometry}

\section*{Introduction}

 The term scheme was coined the first time by Chavalley, who defined a scheme in [4] as a certain system of local commutative rings associated to the subvarietes of an integral algebraic variety. On the other hand, inspired by the works of \'{E}lie Cartan, Serre defined an algebraic variety in [3] as an instance of ringed spaces, i.e., pairs $(X,\mathscr{A})$ where $X$ is a topological space and $\mathscr{A}$ is a sheaf of commutative rings on the open subsets of $X$. At the time, both languages had they restrictions: Chevalley defined schemes only for integral algebraic varieties, which excludes important cases in algebraic geometry, as algebraic varieties over non-reduced rings, while Serre only defined algebraic varieties over algebraically closed fields. Then, Grothendieck made a Copernican Turn when he introduced the language of schemes as exposed in [2], which solves the two previous restrictions, through a functorial formalism and the use of nilpotents, which later would become a crucial aspect in Grothendieck's algebraic geometry, specially for the definition of \'{e}tale and smooth morphisms of schemes, with recent reverberations even in synthetic differential geometry. 
 
 Although the functorial formalism in algebraic geometry was already present in [2], a strict presentation of the language of schemes only through categorical terms, without any mention to ringed spaces, appeared at the first time in [5]. For a brief introduction to this strictly functorial algebraic geometry, we also indicate [6]. Then, it became clear to algebraic geometers that the notion of schemes is strictly functorial, and that modelizing the category of schemes in locally ringed spaces was only an intermediary step.
 
 It happens that this new functorial geometry of schemes is not restricted only to algebraic geometry, but can also be employed, as we shall see in this paper, to other \emph{geometric contexts}, as topological geometry, differential geometry, analytic geometry, etc. In order to give a brief illustration of this, consider, for example, the category $C$ formed by the open subsets of cartesian spaces $\mathbb{R}^{n}$, where $n$ varies over the integers $\geq 0$. A topological manifold can be defined as a topological space $X$ which is locally homeomorphic to an object of $C$ \footnote{There are other technical topological conditions in the definition of topological manifolds, which are certain separability prescriptions, but they are not essential for our reasoning here, and there is no need to suppose such conditions for our purposes. }. Hence, every topological manifold $X$ admits an open covering $(U_{i} \rightarrow X)_{i \in I}$ such that $U_{i}$ is isomorphic to an open subset of $\mathbb{R}^{n_{i}}$ for some integer $n_{i} \geq 0$. Now, from the previous definition, we can verify that the evident functor
          $$ \mathcal{M} \longrightarrow \widehat{C}, \quad X \mapsto (U \mapsto Hom_{\mathcal{M}}(U,X)) $$
is faithful fully, where $\mathcal{M}$ denotes the full subcategory of topological spaces formed by the topological manifolds and $\widehat{C}$ denotes the category of presheaves over $C$. Moreover, it is possible to define precisely (only in categorical language) when a presheaf $F$ over $C$ is representable by a topological manifold. First, we define the category $\mathcal{E}$ of sheaves over the site $(C,J)$, where $J$ is the Grothendieck topology over $C$ induced from the open coverings. Then, we verify that each topological manifold is an object of $\mathcal{E}$ through the functor $\mathcal{M} \rightarrow \widehat{C}$ above. Now, we can extend the class of open immersions of $C$ to $\mathcal{E}$ (formulating a diagramatic condition). Finally, we can verify that a presheaf $F$ over $C$ is representable by a topological manifold if, and only if, it is an object of $\mathcal{E}$ and there exists a family of open immersions $(U_{i} \rightarrow F)_{i \in I}$, where each $U_{i}$ is representable by an object of $C$, inducing a canonical epimorphism
   $$ \coprod_{i \in I} U_{i} \longrightarrow F $$
in the category $\mathcal{E}$. A family of open immersions of the previous type for a sheaf $F$ is called an open atlas of $F$. Hence, we can redefine a topological manifold as a sheaf over $(C,J)$ which admits at least one open atlas. This formalises the idea that a topological manifold is the gluing of coherent family of euclidean open sets.

The same argument given in the above paragraph can be replied for differential manifolds (resp. (complex) analytic manifolds), regarded we start from the subcategory $\mathbf{C}_{\infty}$ of $C$ with the same objects, but admitting only $\mathcal{C}^{\infty}$-functions as morphisms, and considering the category $Diff$ of differential manifolds in place of $\mathcal{M}$.
 
 Several guidances to formalise these \emph{schematic categories} are known today ([11], [12], [13], [14], and [14] \footnote{We also indicate the reader to see [10].}). In this paper, we follow To\"{e}n-Vaqui\'{e}-Vezzosi's orientation, which concerns about geometric contexts, i.e., triples $(C,J,\mathbf{P})$ where $C$ is a small category (the category of local models), $J$ is a Grothendieck topology over $C$ and $\mathbf{P}$ is an admissible class of arrows in $C$ (the class of local morphisms, codifying an abstract notion of open immersion). Yet, an axiomatic for these triples can be very flexible, and we suppose less conditions than the ones defined in [11]. 
 
 We can define the notion of scheme in every geometric context $(C,J,\mathbf{P})$ as certain full subcategory of the category of sheaves over the site $(C,J)$ (in particular, these elementary schemes will be functors from $C^{o}$ to the category of sets $Ens$, where $C^{o}$ denotes the dual category of $C$). In the language of geometric contexts, geometry is codified as \emph{local properties which are invariant under base change} \footnote{In this sense, the language of elementary schemes is a synthesis between Riemann and Klein conceptions of geometry: the locality of the geometric properties is an inspiration of Riemann, while the invariance under base change is an inspiration of Klein.}.
 
 The language of geometric contexts and elementary schemes has several deep consequences which lies beyond the simple generalisation of the category of schemes and the uniformity of geometry in mathematics. One of the major goals is the study of $\mathbb{F}_{1}$-algebraic geometry (and even possible new geometries) in this setting, which has deep relations with Riemann Hypothesis. 
 
 After introducing geometric contexts, the three next steps (which we do not give in this paper) would be (1). A (co)homological study of elementary schemes in geometric contexts; (2). The formalisation, in the language of geometric contexts (eventually under suitable hypothesis), of some crucial geometric notions as (co)tangent bundle, normal bundle, tangent cone, etc., as well eventual relations that these formal geometric constructions would have with (co)homologies theories over elementary schemes; (3). Morphisms of geometric contexts and transfer of geometric data between schematic categories.

 All the proofs in this paper are elementary, in the sense that they follow formally from the definitions without any artificial or technical enhancement.

\subsection*{General conventions} 
  We assume that reader has familiarity with basic categorical language, including Grothendieck topologies, sites, categories of sheaves over sites, etc. We indicate [1] as a main reference on this subject, but the preliminary chapter of [16] is also a nice survey on topos theory. If the reader has preferences for an English textbook, we strongly recommend [7] or [8], and the preliminary chapter of [9] is yet another possibility. However, a brief exposition about categories of sheaves is given in the begin of the first section. Concerning technical general conventions, we assume two Grothendieck universes $\mathsf{U}$ and $\mathsf{V}$ such that $\omega \in \mathsf{U} \in \mathsf{V}$ (see \textsc{Expos\'{e} I} of [1] as a main reference about Grothendieck universes). The term set will be reserved for the elements of $\mathsf{U}$, while the term class will be used in latus sensus, indicating both an element of $\mathsf{U}$ as one of $\mathsf{V}$. If $X$ and $Y$ are two objects in a category $C$, then we denote by $Hom_{C}(X,Y)$ the set of morphisms from $X$ to $Y$ in $C$. The symbols $Ob(C)$ and $Fl(C)$ denote respectively the classes of objects and arrows of a given category $C$. Given an object $U$ in a category $C$, we indicate by $Id_{U}$ the identity arrow of $U$. Given two categories $C$ and $D$, we denote by $\underline{Hom}(C,D)$ the category of functors from $C$ to $D$, with the morphisms being natural transformations. When $C$ and $D$ are co-complete, we denote by $\underline{Hom}_{!}(C,D)$ the full subcategory of $\underline{Hom}(C,D)$ formed by the functors which commute with colimits. A category $C$ will be called small if $Ob(C)$ and $Fl(C)$ are isomorphic to an element of $\mathsf{U}$, and locally small, if for any objects $X,Y \in Ob(C)$, the set $Hom_{C}(X,Y)$ is isomorphic to an element of $\mathsf{U}$. We denote by $Ens$ the category of small sets, i.e., the category of sets which belong to the universe $\mathsf{U}$. The term topos will always make reference to a Grothendieck topos. Given a category $C$ (resp. a site $(C,J)$), we denote always by $\widehat{C}$ (resp. $Sh(C,J)$) the category of presehaves (resp. sheaves) over $C$ (resp. over $(C,J)$).  If $\mathbf{P}$ is a class of arrows in a category $C$, we say that an arrow $\varphi$ of $C$ is a $\mathbf{P}$-morphism (resp. $\mathbf{P}$-monomorphism, $\mathbf{P}$-epimorphism, $\mathbf{P}$-isomorphism) if $\varphi$ is an element of $\mathbf{P}$ (resp. a monomorphism in $\mathbf{P}$, an epimorphism in $\mathbf{P}$, an isomorphism in $\mathbf{P}$).

\section{Geometric contexts}

We start fixing some terminology and recalling some well known results about categories of (pre)sheaves. \\

  \emph{Presheaves} - Let $C$ be a small category. The category of presheaves over $C$, denoted by $\widehat{C}$, is the category of functors from $C^{o}$ to the category of sets $Ens$. The Yoneda functor
    $$ h: C \longrightarrow \widehat{C}, \quad U \mapsto (V \mapsto Hom_{C}(V,U)) $$  
satisfies the following condition (known as Yoneda's Lemma): given any presheaf $X$ over $C$, the function
    $$ Hom_{\widehat{C}}(h_{U},X) \longrightarrow X(U), \quad f \mapsto f_{U}(Id_{U}) $$
is a natural bijection (in regard to the variables $U$ and $X$). As a consequence, we have that the Yoneda functor $h$ is also faithful fully. In virtue of this property, we often denote just by $U$ the representable presheaf $h_{U}$ associated to an object $U$ of $C$. Moreover, $\widehat{C}$ is complete and co-complete, and characterized by the following universal property: if $\mathcal{E}$ is any co-complete category, the functor
       $$ \underline{Hom}_{!}(\widehat{C}, \mathcal{E}) \longrightarrow \underline{Hom}(C, \mathcal{E}), \quad F \mapsto F \circ h $$
is an equivalence of categories. Given a presheaf $F$ over $C$ and $U \in Ob(C)$, we call the elements of $F(U)$ sections of $F$ at $U$. If $V \rightarrow U$ is a morphism of $C$ and $s \in F(U)$, we also denote by $s \vert_{V}$ the image of $s$ in $F(V)$ through the function $F(U) \rightarrow F(V)$ induced by the functor $F$. \\  
  
  \emph{Cartesian squares} - A diagram of the form
  \[
  \xymatrix{
    X' \ar[r] \ar[d]   &    Y' \ar[d] \\
    X \ar[r]           &    Y
  }
  \]
in a category $C$ is called a cartesian square (\emph{pullback}, \emph{fibred product}) if $X'$ represents the limit of the diagram
\[
\xymatrix{
              &   Y' \ar[d] \\
  X \ar[r]    &   Y.
}
\]
 An arrow $X \rightarrow Y$ in $C$ is called \emph{cartesian} if any morphism of the form $Y' \rightarrow Y$ in $C$ can be completed to a cartesian square of the form
\[
\xymatrix{
  X' \ar[r] \ar[d]   &   Y' \ar[d] \\
  X \ar[r]           &   Y.
}
\]
The notions of co-cartesian squares and co-cartesian morphisms are defined dually. \\

 \emph{Admissible class of arrows} -  Let $C$ be a category and $\mathbf{P}$ be a class of arrows in $C$. We say that $\mathbf{P}$ is admissible if it satisfies the following conditions:
  \begin{enumerate}
     \item (\emph{Contains all identities}) For every $U \in Ob(C)$, the identity arrow $Id_{U}: U \rightarrow U$ is a $\mathbf{P}$-morphism.
     \item (\emph{Stability under base change}) For every Cartesian square
     \[
     \xymatrix{
       U' \ar[r]^{\varphi'} \ar[d]   &   V' \ar[d]\\
       U \ar[r]_{\varphi}            &   V
     }
     \]
 if $\varphi$ is a $\mathbf{P}$-morhpism, then $\varphi'$ is also a $\mathbf{P}$-morphism.    
     \item (\emph{Stability under composition}) If $\varphi: U \rightarrow V$ and $\psi: V \rightarrow W$ are two composable arrows in $C$, where $\varphi$ and $\psi$ are both $\mathbf{P}$-morphisms, then $\psi \circ \varphi$ is also a $\mathbf{P}$-morphism. 
     
  \end{enumerate}
We can verify easily that if $\mathbf{P}$ is an admissible class of arrows in $C$, then $\mathbf{P}$ contains all the \emph{isomorphisms} in $C$. \\ 

\emph{Sieve} - Let $C$ be a small category. A sieve of an object $U \in Ob(C)$ is a subfunctor of the representable functor $h_{U}$. If $\varphi: U \rightarrow V$ is a morphism of $C$ and $R$ is a sieve of $V$, then we denote by $\varphi^{-1}(R)$ the sieve of $U$ defined by the Cartesian square:
\[
\xymatrix{
 \varphi^{-1}(R) \ar[d] \ar[r]    &  R \ar[d] \\
 h_{U} \ar[r]_{h_{\varphi}}               &  h_{V}.
}
\]
 \\

\emph{Grothendieck topology} - We recall that a Grothendieck topology $J$ on $C$ is a function which assigns to each object $U$ of $C$ a set $J(U)$ of sieves of $U$ such that 
 \begin{enumerate}
   \item If $\varphi: U \rightarrow V$ is a morphism in $C$ and $R \in J(V)$, then $\varphi^{-1}(R) \in J(U)$
   \item If $R$ and $S$ are two sieves of an object $U \in Ob(C)$, where $R \in J(U)$ and, for every morphism $\varphi: V \rightarrow U$ in $R$, we have $\varphi^{-1}(S) \in J(V)$, then $S \in J(U)$.
   \item For every $U \in Ob(C)$, $h_{U} \in J(U)$.
   
 \end{enumerate} 
A sieve $R$ of $U$ is generated by a family of arrows of the form $(\rho_{i}:U_{i} \rightarrow U)_{i \in I}$ if each arrow $\varphi:V \rightarrow U$ in $R$ factors through an arrow $\rho_{i}:U_{i} \rightarrow U$ for some $i \in I$. Conversely, a family of arrows of the form $(\rho_{i}:U_{i} \rightarrow U)_{i \in I}$ is called a $J$-covering if the sieve generated by this family is an element of $J(U)$. \\

\emph{Grothendieck pre-topology} - A Grothendieck pre-topology $Cov$ over a small category $C$ is a function assigning to each object $U \in Ob(C)$ a set $Cov(U)$ of families of arrows of the form $(U_{i} \rightarrow U)_{i \in I}$, called coverings, such that the following conditions are verified
 \begin{enumerate}
   \item If $(U_{i} \rightarrow U)_{i \in I}$ is a covering, then, for every $i \in I$, the arrow $U_{i} \rightarrow U$ is cartesian.
   \item If $\varphi: V \rightarrow U$ is a morphism of $C$ and $(U_{i} \rightarrow U)_{i \in I}$ is a covering, then, $(V \times_{U} U_{i} \rightarrow V)_{i \in I}$ is a covering.
   \item If $(U_{i} \rightarrow U)_{i \in I}$ is a covering and, for every $i \in I$, $(V_{i,a} \rightarrow U_{i})_{a \in A_{i}}$ is a covering, then, the family $(V_{i,a} \rightarrow \rightarrow U_{i} \rightarrow U)$, given by composition, is a covering.
   \item If $\varphi: V \rightarrow U$ is any isomorphism in $C$, then the singleton $\{\varphi: V \rightarrow U\}$ is a covering.
   
 \end{enumerate}
If $Cov$ is a Grothendieck pre-topology on $C$, then, we can always form a topology $J$, defining, for each $U \in Ob(C)$, the set $J(U)$ of sieves $R$ of $U$ which contains a sieve $R'$ generated by a covering $(U_{i} \rightarrow U)_{i  \in I}$ in $Cov(U)$. In this case, we say that $J$ is the Grothendieck topology generated by $Cov$. \\

\emph{Sites} - A site is a pair $(C,J)$ where $C$ is a \emph{small} category and $J$ is a Grothendieck topology in $C$.\\

\emph{Sheaves} - Let $(C,J)$ be a site. A presheaf $F$ over $C$ is called a sheaf over the site $(C,J)$ if for every object $U$ of $C$ and every $J$-sieve $R$, the function
     $$ Hom_{\widehat{C}}(h_{U}, F) \longrightarrow Hom_{\widehat{C}}(R,F), \quad f \mapsto f \vert_{R} = f \circ \imath_{R},$$
where $\imath_{R}: R \hookrightarrow h_{U}$ denotes the inclusion arrow, is a bijection. We denote by $Sh(C,J)$ the category of sheaves over $(C,J)$, which is the full subcategory of $\widehat{C}$ formed by the sheaves. When the Grothendieck topology $J$ is generated by a Grothentieck pre-topology $Cov$, a presheaf $F$ is a sheaf if, and only if, for every $U \in Ob(C)$ and every $J$-covering $(\rho_{i}: U_{i} \rightarrow U)_{i \in I}$, the following condition is verified: if $(s_{i})_{i \in I}$ is family of sections of $F$, with $s_{i} \in F(U_{i})$ for every $i \in I$, such that $s_{i} \vert_{U_{ij}} = s_{j} \vert_{U_{ij}}$ for all $i,j \in I$ (where $U_{ij}$ always denote the fibred product $U_{i} \times_{U} U_{j}$ in this situation), then, there exists a unique section $s \in F(U)$ such that $s \vert_{U_{i}} = s_{i}$ for all $i \in I$. The inclusion functor $Sh(C,J) \hookrightarrow \widehat{C}$ always admits a left adjoint $a: \widehat{C} \rightarrow Sh(C,J)$, which associates to each presheaf $F$ over $C$, it's associated sheaf $a(F)$. Moreover, the functor $a$ is left exact, i.e., $a$ commutes with finite limits. We can verify that a morphism $f: F \rightarrow G$ of sheaves is an epimorphism precisely when $f$ is local surjective, i.e., when for every $U \in Ob(C)$ and $s \in G(U)$, there exists a $J$-covering $(U_{i} \rightarrow U)_{i \in I}$ such that $s \vert_{U_{i}}$ is the image of some section of $F(U_{i})$ by the arrow $f_{U_{i}}: F(U_{i}) \rightarrow G(U_{i})$. Using the previous characterization of sheaf epimorphisms, we can verify that in any category of sheaves, the \emph{epimorphisms are stable by base change}. The topology $J$ is called \emph{sub-canonical} if for every $U \in Ob(C)$, the representable presheaf $h_{U}$ is a sheaf over $(C,J)$. If $(C,J)$ a site, $J$ is sub-canonical, and $(U_{i} \rightarrow U)_{i \in I}$ is a family of morphisms in $C$, then $(U_{i} \rightarrow U)_{i \in I}$ is a $J$-covering precisely when the induced arrow 
     $$ \coprod_{i \in I} h_{U_{i}} \longrightarrow h_{U} $$
is a \emph{sheaf epimorphism}, which is equivalent to say that the above arrow, considered as a morphism of \emph{presheaves}, is a local epimorphism (see \emph{Definition 16.1.5} of [7].). A morphism of sheaves $f: F \rightarrow G$ is a monomorphism precisely when $f_{U}: F(U) \rightarrow G(U)$ is injective for every $U \in Ob(C)$. \\

\emph{Topos} - A locally small category $\mathcal{E}$ is called a topos if it is equivalent to the category of sheaves over some site, which is equivalent to say that $\mathcal{E}$ admits finite limits and it is a left exact reflective localisation of a presheaf category, i.e., there exists a small category $C$ and a left exact functor $a: \widehat{C} \rightarrow \mathcal{E}$ which admits a faithful fully right adjoint $\gamma: \mathcal{E} \rightarrow \widehat{C}$. The functor $a: \widehat{C} \rightarrow \mathcal{E}$ is called the sheafification functor. \\

\textsc{Convention} - Through all this work, every Grothendieck topology is supposed to be generated by a Grothendieck pre-topology \footnote{This is the case in all the examples of sites coming from geometry. Indeed, the reader can even exchange the term `Grothendieck topology' by `Grothendieck pre-topology' in the sequel, since we always work only with coverings. We remark the many authors uses the terminology `Grothendieck topology' to define Grothendieck pre-topologies. We could also have opted this terminology here. Yet, this has future technical disadvantages and we prefer to maintain the definitions as presented in [1]. However, this is not a genuine restriction, because any topos $\mathcal{E}$ admits a \emph{standard site} $(C,J)$, which is a site where $C$ admits finite limits, and $J$ is generated by a pre-topology. }.

\begin{definition}\label{Geometric context}
  A \emph{geometric context} consists of a triple $(C,J,\mathbf{P})$ where $C$ is a small category, $J$ is a Grothendieck topology on $C$ and $\mathbf{P}$ is a class of arrows in $C$, such that the following conditions are verified \\

  (GC1). $C$ admits \emph{finite products} \\
  
  (GC2). $J$ is \emph{sub-canonical} \footnote{We should be able to define a more general notion of geometric context which does not suppose this condition, since it excludes important cases of Grothendieck topologies, as the Voevodsky's $h$-topology in algebraic geometry, which is not subcanonical. Moreover, this condition does not seem to be essential from a strict conceptual perspective. Yet, we follow To\"{e}n-Vaqui\'{e} here and assume (GC2), since it also facilitates many future arguments. Actually, it would be more appropriated, at least theoretically, to define geometric contexts directly in a topos $\mathcal{E}$, without necessarily mention a site of definition of $\mathcal{E}$. } \\ 
  
  (GC3). $\mathbf{P}$ is \emph{admissible} \\
  
  (GC4). $\mathbf{P}$ is $J$-\emph{local}: If $\varphi: U \rightarrow V$ is an arrow of $C$ and there exists a $J$-covering $(\rho_{i}: U_{i} \rightarrow U)_{i \in I}$ such that $\rho_{i} \in \mathbf{P}$ and $\varphi \circ \rho_{i} \in \mathbf{P}$ for every $i \in I$, then $\varphi \in \mathbf{P}$. \\
  
  (GC5). $J$ is $\mathbf{P}$-\emph{generated}: For every object $U$ of $C$ and every $J$-covering $(\rho_{a}: U_{a} \rightarrow U)_{a \in A}$, there exists a family of morphisms $(\varphi_{i}: V_{i} \rightarrow U)_{i \in I}$, with $\varphi_{i} \in \mathbf{P}$ for every $i \in I$, such that each arrow $\rho_{a}: U_{a} \rightarrow U$ factors through $\varphi_{i}: V_{i} \rightarrow U$ for some $i \in I$:
  \[
  \xymatrix{
    U_{a} \ar[r]^{\rho_{a}} \ar[d]   &     U \\
    V_{i} \ar[ur]_{\varphi_{i}}
  }
  \]
In other words, every covering admits a $\mathbf{P}$-refinement. \\
  
  (GC6). $\mathbf{P}$ is \emph{locally cartesian}: If $\varphi: U \rightarrow V$ is a $\mathbf{P}$-morphism, then there exists a $J$-covering $(\rho_{i}: U_{i} \rightarrow U)_{i \in I}$, with $\rho_{i} \in \mathbf{P}$ for every $i \in I$, such that the morphisms $\varphi \circ \rho_{i}$, for $i \in I$, are all \emph{cartesian}.
  
\end{definition}

\emph{Terminology} - Given a geometric context $(C,J,\mathbf{P})$, we call local models the objects of $C$ and open immersions (resp. local arrows) the $\mathbf{P}$-monomorphisms (resp. $\mathbf{P}$-morphisms).\\

We recall that any morphism $f: F \rightarrow G$ in a category of sheaves factors through $F \xrightarrow{p} Im(f) \xrightarrow{i} G$, where $i$ is a monomorphism and $p$ is an epimorphism. Moreover, this factorization is unique up to isomorphism, and if $f$ is a monomorphism (resp.  $f$ is an epimorphism), then $p$ (resp. $i$) is an isomorphism. To construct the sheaf $Im(f)$, form the \emph{co-cartesian} square
\[
\xymatrix{
  F \ar[r]^{f} \ar[d]_{f}    &   G \ar[d]^{q} \\
  G \ar[r]_{r}                   &   H
}
\]
and then, define $i:Im(f) \rightarrow G$ as the equaliser of the parallel arrows $q,r: G \rightarrow H$, and $p: F \rightarrow Im(f)$ as the restriction of $f$.

\begin{definition}\label{Sheaf open immersion}
  Let $(C,J,\mathbf{P})$ be a geometric context. A morphism $f: F \rightarrow G$ of sheaves over $(C,J)$ is called an open immersion if it satisfies the following condition: for every morphism $h_{U} \rightarrow G$, with $U \in Ob(C)$, there exists a family of $\mathbf{P}$-morphisms $(U_{i} \rightarrow U)_{i \in I}$ in $C$, such that $f': F \times_{G} h_{U} \rightarrow h_{U}$ is a \emph{monomorphism} and the image of the canonical arrow
        $$ \coprod_{i \in I} h_{U_{i}} \longrightarrow h_{U} $$
is isomorphic to the image of the canonical arrow 
         $$ F \times_{G} h_{U} \longrightarrow h_{U}. $$

\end{definition}

\begin{aforisma}\label{Open immersions are mono}
 It follows from the definition (\ref{Sheaf open immersion}) that every open immersion of sheaves in a geometric context is a \emph{monomorphism}. Indeed, let $f: F \rightarrow G$ be a open immersion of sheaves in a geometric context $(C,J,\mathbf{P})$. In order to proof that $f$ is a monomorphism, it is enough to verify that given two arrows of the form $s,t: h_{U} \rightarrow F$, where $U \in Ob(C)$ and $f \circ s = f \circ t$, we have $s=t$ (by Yoneda's lemma). Let $g= f \circ s=f \circ t$ be the composed arrow from $h_{U}$ to $G$ and consider the cartesian square
 \[
 \xymatrix{
     F \times_{G} h_{U} \ar[r]^{f'} \ar[d]_{g'}  &    h_{U} \ar[d]^{g} \\
    F \ar[r]_{f}                                 &    G.
 }
 \]
Since $f$ is supposed to be an open immersion, $f'$ is a monomorphism. Moreover, it follows from the universal property of fibred products, that there exists a unique arrow $s': h_{U} \rightarrow F \times_{G} h_{U}$ (resp. $t': h_{U} \rightarrow F \times_{G} h_{U}$) such that the diagram
\[
\xymatrix{
  h_{U} \ar[dr]^{s'}  \ar@/_1pc/[ddr]_{s} \ar@/^1pc/[drr]^{Id}   &                                               &        \\
                                                   &   F \times_{G} h_{U} \ar[d]_{g'} \ar[r]^{f'}  &  h_{U} \ar[d]^{g} \\
                                                   &   F \ar[r]_{f}                                &   G    
}
\]
(resp. the diagram
\[
\xymatrix{
  h_{U} \ar[dr]^{t'}  \ar@/_1pc/[ddr]_{t} \ar@/^1pc/[drr]^{Id}   &                                               &        \\
                                                   &   F \times_{G} h_{U} \ar[d]_{g'} \ar[r]^{f'}  &  h_{U} \ar[d]^{g} \\
                                                   &   F \ar[r]_{f}                                &   G    
}
\]
commutes. Hence, $f' \circ s' = Id = f' \circ t'$, which implies $s' =t'$ (since $f$ is a monomorphism). Therefore, $s=g' \circ s' = g' \circ t' = t$.

\end{aforisma}

\begin{proposition}\label{Stability properties of open immersions}
   For every geometric context $(C,J,\mathbf{P})$ the class $Ouv$ of open immersions in $Sh(C,J)$ induced from $\mathbf{P}$ satisfies the following properties
   \begin{enumerate}
      \item $Ouv$ contains all identities
      \item $Ouv$ is stable by base change
      \item $Ouv$ is local: If $f: F \rightarrow G$ is a morphism of sheaves such that for each arrow $h_{U} \rightarrow G$, with $U \in Ob(C)$, the canonical morphism $F \times_{G} h_{U} \rightarrow h_{U}$ is an open immersion, then $f: F \rightarrow G$ is an open immersion
      \item $Ouv$ is stable by composition
      \item If $f: F \rightarrow G$ and $g: G \rightarrow H$ are two composable arrows of sheaves such that $g \circ f$ and $g$ are open immersions, then $f$ is also an open immersion.
      
   \end{enumerate}

\end{proposition}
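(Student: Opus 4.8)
I would handle the five assertions in order, reducing everything to the definition of open immersion of sheaves (Definition \ref{Sheaf open immersion}), the pasting law for cartesian squares, the fact that open immersions are monomorphisms (\ref{Open immersions are mono}), the stability of epimorphisms under base change in $Sh(C,J)$ recalled in the preliminaries, and the admissibility axioms for $\mathbf{P}$. For (1), given $h_{U}\rightarrow F$ the projection $F\times_{F}h_{U}\rightarrow h_{U}$ is an isomorphism, hence a monomorphism, and the singleton family $\{Id_{U}:U\rightarrow U\}$ lies in $\mathbf{P}$ since $\mathbf{P}$ contains all identities; the image of $h_{U}\rightarrow h_{U}$ is all of $h_{U}$, which matches the image of $F\times_{F}h_{U}\rightarrow h_{U}$. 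For (2), given a cartesian square with $f\colon F\rightarrow G$ an open immersion and $f'\colon F'\rightarrow G'$ its base change along some $G'\rightarrow G$, and given $h_{U}\rightarrow G'$, the pasting law identifies $F'\times_{G'}h_{U}\rightarrow h_{U}$ with $F\times_{G}h_{U}\rightarrow h_{U}$ for the composite $h_{U}\rightarrow G'\rightarrow G$, so the $\mathbf{P}$-family witnessing that $f$ is an open immersion along $h_{U}\rightarrow G$ witnesses it for $f'$ along $h_{U}\rightarrow G'$. For (3), if $F\times_{G}h_{U}\rightarrow h_{U}$ is an open immersion for every $h_{U}\rightarrow G$, I apply Definition \ref{Sheaf open immersion} to this morphism with respect to the identity $h_{U}\rightarrow h_{U}$; since $(F\times_{G}h_{U})\times_{h_{U}}h_{U}\cong F\times_{G}h_{U}$ over $h_{U}$, this directly produces the $\mathbf{P}$-family required to show $f$ is an open immersion.

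The substantive case is (4). Let $f\colon F\rightarrow G$ and $g\colon G\rightarrow H$ be open immersions and fix $h_{U}\rightarrow H$. Applying the open immersion hypothesis to $g$ gives $\mathbf{P}$-morphisms $(U_{a}\rightarrow U)_{a\in A}$ such that $S:=G\times_{H}h_{U}\hookrightarrow h_{U}$ is a monomorphism and $\coprod_{a}h_{U_{a}}\rightarrow h_{U}$ factors as an epimorphism $\coprod_{a}h_{U_{a}}\twoheadrightarrow S$ followed by $S\hookrightarrow h_{U}$. By the pasting law $F\times_{H}h_{U}\cong F\times_{G}S$ with structure map $F\times_{G}S\rightarrow S\hookrightarrow h_{U}$; since $f$ is a monomorphism (\ref{Open immersions are mono}), $F\times_{G}S\rightarrow S$ is a monomorphism, hence so is $F\times_{H}h_{U}\rightarrow h_{U}$. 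For the image condition, each $h_{U_{a}}\rightarrow S\rightarrow G$ lets me apply the open immersion hypothesis to $f$, producing $\mathbf{P}$-morphisms $(V_{a,k}\rightarrow U_{a})_{k\in K_{a}}$ with $\coprod_{k}h_{V_{a,k}}\twoheadrightarrow F\times_{G}h_{U_{a}}\hookrightarrow h_{U_{a}}$; a further application of the pasting law identifies $F\times_{G}h_{U_{a}}\cong (F\times_{G}S)\times_{S}h_{U_{a}}$, so base-changing the epimorphism $\coprod_{a}h_{U_{a}}\twoheadrightarrow S$ along $F\times_{G}S\rightarrow S$ yields an epimorphism $\coprod_{a}(F\times_{G}h_{U_{a}})\twoheadrightarrow F\times_{G}S = F\times_{H}h_{U}$. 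Composing this with the coproduct of the epimorphisms $\coprod_{k}h_{V_{a,k}}\twoheadrightarrow F\times_{G}h_{U_{a}}$ (a coproduct of epimorphisms being an epimorphism) produces $\coprod_{a,k}h_{V_{a,k}}\twoheadrightarrow F\times_{H}h_{U}\hookrightarrow h_{U}$, whence the image of $\coprod_{a,k}h_{V_{a,k}}\rightarrow h_{U}$ equals the image of $F\times_{H}h_{U}\rightarrow h_{U}$. Finally each $V_{a,k}\rightarrow U_{a}\rightarrow U$ is a $\mathbf{P}$-morphism by stability of $\mathbf{P}$ under composition, so $(V_{a,k}\rightarrow U)_{a,k}$ is the desired family and $g\circ f$ is an open immersion.

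For (5), let $f\colon F\rightarrow G$ and $g\colon G\rightarrow H$ with $g\circ f$ and $g$ open immersions. Since $g$ is a monomorphism, it is injective on sections, so for any $h_{U}\rightarrow G$ and any $V$ one has $g(f(x))=g(u(y))$ iff $f(x)=u(y)$; hence $F\times_{G}h_{U}$ and $F\times_{H}h_{U}$ (along $h_{U}\rightarrow G\rightarrow H$) coincide compatibly with their maps to $h_{U}$, and the $\mathbf{P}$-family witnessing that $g\circ f$ is an open immersion along $h_{U}\rightarrow G\rightarrow H$ witnesses that $f$ is an open immersion along $h_{U}\rightarrow G$. The main obstacle is the bookkeeping in part (4): one must iterate the pasting law correctly to identify the relevant fibre products and then chain the epimorphisms — base-changed, coproduct-ed, and composed — while keeping every arrow over the fixed $h_{U}$, so that the final image is genuinely computed as a subobject of $h_{U}$; everything else is formal.
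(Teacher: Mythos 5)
Your proof is correct, but it is worth saying how it relates to the paper's, which consists of the single sentence that (1)--(3) are formal consequences of the definitions and that (4) and (5) ``follow immediately from (2) and (3).'' Your treatments of (1), (2) and (3) are exactly the intended formal arguments (pullback along the composite for (2), pullback along the identity of $h_{U}$ for (3)). Where you genuinely diverge is (4): the paper's suggested route --- use (3) to reduce to a target $h_{U}$ and (2) to factor $F\times_{H}h_{U}\rightarrow h_{U}$ through $G\times_{H}h_{U}$ --- does not by itself finish the job, because after that reduction one is still facing a composite of two open immersions and must splice the two witnessing $\mathbf{P}$-families together; your direct argument (refining the family $(U_{a}\rightarrow U)$ for $g$ by families $(V_{a,k}\rightarrow U_{a})$ for $f$, base-changing and composing epimorphisms, and invoking stability of $\mathbf{P}$ under composition) is precisely the missing content, and it is essentially the same mechanism the paper only supplies later, in the lemma following this proposition, via axiom (GC6). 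Your (5) is likewise a correct unwinding: observing that $g$ monic forces $F\times_{G}h_{U}\cong F\times_{H}h_{U}$ is the same as saying $f$ is the base change of $g\circ f$ along $g$, so your argument is really an instance of (2) made explicit. In short, you prove strictly more than the paper writes down, and for (4) your extra work is not optional.
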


\begin{proof}
  The assertions (1), (2) and (3) are easy formal consequences of the definitions and the assertions (4) and (5) follow immediately from (2) and (3).

\end{proof}

\begin{lemma}
   Let $f: F \rightarrow h_{U}$ be a morphism of sheaves in a geometric context $(C,J,\mathbf{P})$. Then, $f$ is an open immersion of sheaves if, and only if, $f$ is a monomorphism and there exists a family of $\mathbf{P}$-morphisms $(U_{i} \rightarrow U)_{i \in I}$ in $C$, such that the image of $f$ is isomorphic to the image of the canonical induced morphism
           $$ \coprod_{i \in I} h_{U_{i}} \longrightarrow h_{U}. $$

\end{lemma}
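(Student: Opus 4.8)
The plan is to establish the two implications separately. The direction ``open immersion implies monomorphism with the stated image'' is immediate: I would apply Definition~\ref{Sheaf open immersion} with $G = h_U$ and to the morphism $\mathrm{Id}_{h_U}\colon h_U \to h_U$. Since the projection $F\times_{h_U} h_U \to h_U$ is canonically identified with $f$ itself, the definition asserts exactly that $f$ is a monomorphism and that, for some family of $\mathbf{P}$-morphisms $(U_i \to U)_{i\in I}$, the image of $\coprod_i h_{U_i}\to h_U$ coincides, as a subobject of $h_U$, with the image of $f$.

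For the converse I would assume $f$ is a monomorphism and fix a family of $\mathbf{P}$-morphisms $(U_i\to U)_{i\in I}$ with $\mathrm{Im}(f) = \mathrm{Im}\big(\coprod_i h_{U_i}\to h_U\big) =: I \hookrightarrow h_U$, and then verify Definition~\ref{Sheaf open immersion} directly. So, fixing an arbitrary $g\colon h_V\to h_U$ with $V\in Ob(C)$, I must produce a family of $\mathbf{P}$-morphisms over $V$ witnessing that $f$ is an open immersion. That $f'\colon F\times_{h_U}h_V\to h_V$ is a monomorphism is automatic, being a base change of $f$. For the image condition I would use two standard topos facts --- epi--mono factorisations are stable under pullback, and coproducts are universal: pulling the factorisation $F\twoheadrightarrow I\hookrightarrow h_U$ back along $g$ shows $\mathrm{Im}(f') = I\times_{h_U}h_V$, while pulling $\coprod_i h_{U_i}\twoheadrightarrow I\hookrightarrow h_U$ back along $g$ and using universality of coproducts shows $I\times_{h_U}h_V = \mathrm{Im}\big(\coprod_i (h_{U_i}\times_{h_U}h_V)\to h_V\big)$.

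The remaining --- and essential --- step is to exhibit each $h_{U_i}\times_{h_U}h_V$ as covered by representables along $\mathbf{P}$-morphisms to $h_V$, since $U_i\to U$ need not itself be cartesian. This is where I would invoke axiom (GC6): for each $i$ choose a $J$-covering $(U_{ik}\to U_i)_{k\in K_i}$ by $\mathbf{P}$-morphisms such that each composite $U_{ik}\to U_i\to U$ is cartesian; admissibility of $\mathbf{P}$ makes this composite a $\mathbf{P}$-morphism. Cartesianness then ensures $V_{ik}:= V\times_U U_{ik}$ exists in $C$, with $h_{V_{ik}}\cong h_V\times_{h_U}h_{U_{ik}}$ (Yoneda preserves limits and $J$ is sub-canonical) and with $V_{ik}\to V$ again a $\mathbf{P}$-morphism by stability under base change. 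Since $J$ is sub-canonical, $\coprod_k h_{U_{ik}}\twoheadrightarrow h_{U_i}$ is a sheaf epimorphism; base-changing it along the projection $h_{U_i}\times_{h_U}h_V\to h_{U_i}$ and composing pullbacks gives an epimorphism $\coprod_k h_{V_{ik}}\twoheadrightarrow h_{U_i}\times_{h_U}h_V$. Forming the coproduct over $i$, composing to $h_V$, and using that precomposition with an epimorphism does not change the image, I would conclude
\[
  \mathrm{Im}\Big(\coprod_{i,k} h_{V_{ik}}\to h_V\Big) = \mathrm{Im}\Big(\coprod_i (h_{U_i}\times_{h_U}h_V)\to h_V\Big) = \mathrm{Im}(f'),
\]
so that $(V_{ik}\to V)_{i,k}$ is the required $\mathbf{P}$-family and $f$ is an open immersion.

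I expect the main obstacle to be exactly this last manoeuvre: converting the abstract family $(U_i\to U)$ handed to us by hypothesis into a family of \emph{cartesian} $\mathbf{P}$-morphisms --- so that the fibre products over $h_U$ land among representables --- while checking that the refined morphisms stay inside $\mathbf{P}$ and that passing to coproducts preserves the relevant image. Axiom (GC6), together with admissibility and sub-canonicity, is precisely what makes this possible; everything else is formal manipulation of the epi--mono factorisation.
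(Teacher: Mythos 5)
Your proposal is correct and follows essentially the same route as the paper: the forward direction by specializing Definition~\ref{Sheaf open immersion} to the identity of $h_U$, and the converse by invoking (GC6) to refine each $U_i \to U$ into cartesian $\mathbf{P}$-morphisms $U_{ik}\to U$, pulling back to obtain the family $(V_{ik}\to V)$ via base-change stability of $\mathbf{P}$, and matching images using pullback-stability of epi--mono factorizations. Your write-up is in fact slightly more explicit than the paper's on why the images agree (universality of coproducts and invariance of the image under precomposition with an epimorphism), which the paper leaves as ``we can verify easily.''
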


\begin{proof}
  It $f$ is an open immersion, then the reciprocal in the lemma follows easily from the definition (\ref{Sheaf open immersion}) and from the fact that open immersions are stable by base change ((2) of (\ref{Stability properties of open immersions})). Now, suppose that $f$ is a monomorphism and there exists a family of $\mathbf{P}$-morphisms $(U_{i} \rightarrow U)_{i \in I}$ in $C$, such that the image of $f$ coincides (isomorphically) to the image of the induced arrow of sheaves
         $$ \coprod_{i \in I} h_{U_{i}} \longrightarrow h_{U}. $$
Moreover, let $h_{V} \rightarrow h_{U}$ be a morphism of sheaves for some $V \in Ob(C)$, corresponding, then, to a morphism $V \rightarrow U$ in $C$. Hence, by the fact that $\mathbf{P}$ is \emph{locally cartesian} (condition (GC6) of (\ref{Geometric context})), there exists, for each $i \in I$, a $J$-covering $(\rho_{i,a}:U_{i,a} \rightarrow U_{i})_{a \in A_{i}}$  with $\rho_{i,a} \in \mathbf{P}$ for every $a \in A_{i}$, such that the composed arrows $U_{i,a} \rightarrow U_{i} \rightarrow U$, which are all $\mathbf{P}$-morphisms, are also all \emph{cartesian}, which implies in the existence of cartesian squares of the form
\[
\xymatrix{
  V_{i,a} \ar[d] \ar[r]  &   V \ar[d] \\
  U_{i,a}  \ar[r]                     &   U,
}
\]
in $C$, and since $\mathbf{P}$ is \emph{stable by base change}, each one of the arrows $V_{i,a} \rightarrow V$ is a $\mathbf{P}$-morphism. If we consider the set given by the disjoint union
     $$ A =_{df} \bigsqcup_{i \in I} A_{i}, $$
then the elements of the set $A$ are pairs $(i,a)$ with $i \in I$ and $a \in A_{i}$, and we have a family of $\mathbf{P}$-morphisms $(V_{\alpha} \rightarrow V)_{\alpha \in A}$.
 Now, with the previous notations, we can verify easily that the canonical arrow $F \times_{h_{U}} h_{V} \rightarrow h_{V}$ is a monomorphism (since monomorphisms are stable by base change) and it's image is isomorphic to the image of the induced morphism 
       $$ \coprod_{\alpha \in A} h_{V_{\alpha}} \longrightarrow h_{V}, $$
 from where we conclude that $f: F \rightarrow h_{U}$ is actually an open immersion in the sense of (\ref{Sheaf open immersion}). 

\end{proof}

\begin{proposition}\label{Open immersion equivalence}
  Let $(C,J,\mathbf{P})$ be a geometric context. A morphism $\varphi: U' \rightarrow U$ in $C$ is a $\mathbf{P}$-monomorphism if, and only if, the morphism of sheaves $h_{\varphi}: h_{U'} \rightarrow h_{U}$ is an open immersion in the sense of (\ref{Sheaf open immersion}).
  
\end{proposition}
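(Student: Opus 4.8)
The plan is to extract everything from the Lemma just proved (which characterizes open immersions of sheaves with representable codomain), from Yoneda's lemma, and from the fact recorded in the preliminaries that, $J$ being subcanonical, a family $(\psi_{i}\colon U_{i}\to U')_{i}$ of arrows of $C$ is a $J$-covering exactly when $\coprod_{i}h_{U_{i}}\to h_{U'}$ is a sheaf epimorphism. Note also that $h_{\varphi}$ is a monomorphism of sheaves if and only if $\varphi$ is a monomorphism in $C$, since $(h_{\varphi})_{V}$ is the map $\psi\mapsto\varphi\circ\psi$ from $Hom_{C}(V,U')$ to $Hom_{C}(V,U)$. For the easy direction, if $\varphi$ is a $\mathbf{P}$-monomorphism then $h_{\varphi}$ is a monomorphism of sheaves, and the image of $h_{\varphi}$ is literally the image of the canonical arrow $\coprod h_{U_{i}}\to h_{U}$ associated with the one-element family consisting of $\varphi\in\mathbf{P}$ itself; the Lemma then says $h_{\varphi}$ is an open immersion.

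For the converse, assume $h_{\varphi}$ is an open immersion. It is a monomorphism by \ref{Open immersions are mono}, so $\varphi$ is a monomorphism in $C$, and it remains to prove $\varphi\in\mathbf{P}$. By the Lemma there is a family of $\mathbf{P}$-morphisms $(U_{i}\to U)_{i\in I}$ whose induced arrow $g\colon\coprod h_{U_{i}}\to h_{U}$ has image isomorphic, as a subobject of $h_{U}$, to the image of $h_{\varphi}$; since $h_{\varphi}$ is a monomorphism, that image is $h_{U'}$ with inclusion $h_{\varphi}$, so the epi--mono factorization of $g$ reads $\coprod h_{U_{i}}\xrightarrow{\,e\,}h_{U'}\xrightarrow{h_{\varphi}}h_{U}$ with $e$ an epimorphism. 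Restricting $e$ to each summand and invoking Yoneda produces morphisms $\psi_{i}\colon U_{i}\to U'$ in $C$ with $\varphi\circ\psi_{i}=(U_{i}\to U)\in\mathbf{P}$, and $e$ is the arrow induced by the $h_{\psi_{i}}$; since $e$ is a sheaf epimorphism and $J$ is subcanonical, $(\psi_{i}\colon U_{i}\to U')_{i\in I}$ is a $J$-covering.

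The decisive step is to promote each $\psi_{i}$ to a $\mathbf{P}$-morphism, using that $\varphi$ is a monomorphism. Forming the pullback of the cospan $U_{i}\xrightarrow{\varphi\psi_{i}}U\xleftarrow{\varphi}U'$ in the sheaf category, the projection $h_{U_{i}}\times_{h_{U}}h_{U'}\to h_{U_{i}}$ is a monomorphism (a base change of $h_{\varphi}$) split by the section induced by $\psi_{i}$, hence an isomorphism; thus this pullback is representable, and since $h$ is fully faithful the square
\[
\xymatrix{
  U_{i} \ar[r]^{\psi_{i}} \ar[d]_{Id}   &   U' \ar[d]^{\varphi} \\
  U_{i} \ar[r]_{\varphi\psi_{i}}         &   U
}
\]
is cartesian in $C$. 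Its lower edge $\varphi\psi_{i}$ lies in $\mathbf{P}$, so the stability of $\mathbf{P}$ under base change (GC3) gives $\psi_{i}\in\mathbf{P}$. Then $(\psi_{i}\colon U_{i}\to U')_{i\in I}$ is a $J$-covering with $\psi_{i}\in\mathbf{P}$ and $\varphi\circ\psi_{i}\in\mathbf{P}$ for every $i$, so the $J$-locality of $\mathbf{P}$ (GC4) forces $\varphi\in\mathbf{P}$, and $\varphi$ is a $\mathbf{P}$-monomorphism. I expect the main obstacle to be exactly this identification: recognizing that pulling a map back along a monomorphism through which it factors absorbs the monomorphism, and that the resulting pullback --- a priori only a sheaf --- is in fact representable, so that the conclusion $\psi_{i}\in\mathbf{P}$ can be drawn inside $C$, where the class $\mathbf{P}$ lives.
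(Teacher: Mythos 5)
Your proof is correct and follows essentially the same route as the paper's: both directions reduce to the preceding Lemma, the arrows $\psi_{i}\colon U_{i}\rightarrow U'$ are extracted by identifying the image of $h_{\varphi}$ with $h_{U'}$, and the conclusion comes from (GC4). The only difference is that you spell out the step the paper dismisses as easy --- that $\psi_{i}\in\mathbf{P}$ follows from stability under base change --- by checking that the sheaf pullback $h_{U_{i}}\times_{h_{U}}h_{U'}$ is representable (split monomorphism argument) so that the square is genuinely cartesian in $C$; this is a worthwhile clarification, since $C$ is only assumed to have finite products, not fibred products.
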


\begin{proof}
  To prove the proposition, we use the previous lemma. If $\varphi: U' \rightarrow U$ is a $\mathbf{P}$-monomorphism, then the singleton $\{\varphi: U' \rightarrow U\}$ is itself a family of $\mathbf{P}$-morphisms satisfying the conditions of the previous lemma for the morphism $h_{\varphi}: h_{U'} \rightarrow h_{U}$, from where we conclude that $h_{\varphi}$ is an open immersion, since the Yoneda functor preserves monomorphisms. Now, suppose that $h_{\varphi}: h_{U'} \rightarrow h_{U}$ is an open immersion. Then, again by the previous lemma, $\varphi$ is a monomorphism (because $h_{\varphi}$ is a monomorphism and the Yoneda functor is conservative with regard to monomorphisms \footnote{This means that an arrow $\varphi$ of $C$ is a monomorphism if, and only if, $h_{\varphi}$ is a monomorphism in $\widehat{C}$.}), and  there exists a family of $\mathbf{P}$-morphisms $(\varphi_{i}: U_{i} \rightarrow U)_{i \in I}$ such that the image $Im_{\varphi}$ of $h_{\varphi}$ is isomorphic to the image $Im_{p}$ of the canonical arrow 
      $$ p: \coprod_{i \in I} h_{U_{i}} \longrightarrow h_{U}. $$
From the fact that $h_{\varphi}$ is a monomorphism, it follows that $h_{U'}$ is isomorphic to $Im_{\varphi}$, from where we deduce, for each $i \in I$, an arrow $h_{U_{i}} \rightarrow h_{U'}$, which corresponds to an arrow $\rho_{i}: U_{i} \rightarrow U'$ of $C$ such that the triangle
\[
\xymatrix{
        &   U_{i} \ar[d]^{\varphi_{i}} \ar[dl]_{\rho_{i}} \\
 U' \ar[r]_{\varphi} & U
}
\] 
commutes. With the previous notations, we can verify easily (from the fact that $\mathbf{P}$ is stable by base change), that $\rho_{i}:U_{i} \rightarrow U'$ is a $\mathbf{P}$-morphism. Moreover, the family $(\rho_{i}: U_{i} \rightarrow U')_{i \in I}$ is a $J$-covering such that $\rho_{i} \in \mathbf{P}$ and $\varphi \circ \rho_{i} = \varphi_{i} \in \mathbf{P}$. Therefore, by (GC4) of (\ref{Geometric context}), we conclude that $\varphi$ is a $\mathbf{P}$-monomorphism.

\end{proof}

In the following, we give a list of examples of geometric contexts: \\

\subsection*{Topological geometry context}
 Let $(\mathbf{C}_{0}, ouv, et)$ be the triple where \\
 
 (i) $\mathbf{C}_{0}$ is the small full subcategory of $\mathcal{T}op$ formed by the open subsets of $\mathbb{R}^{n}$, for $n$ varying through the integers $\geq 0$; \\
 
 (ii) $ouv$ is the Grothendieck topology on $\mathbf{C}_{0}$ generated by the open coverings; \\
 
 (iii) $et$ is the class of local homeomorphisms in $\mathbf{C}_{0}$. \\
 
Then, we can verify that the triple $(\mathbf{C}_{0}, ouv, et)$ is a geometric context.

\subsection*{Differential geometry context}
  Let $(\mathbf{C}_{\infty}, O_{\infty}, diff)$ be the triple where \\
  
  (i) $\mathbf{C}_{\infty}$ is the subcategory of $\mathbf{C}_{0}$ with the same objects, but admitting only $\mathcal{C}^{\infty}$-functions as morphisms \\
  
  (ii) $O_{\infty}$ is the Grothendieck topology generated by the open coverings in $\mathbf{C}_{\infty}$, which can be described strictly by differential functions in the following way: an $O_{\infty}$-covering $(\varphi_{i}: U_{i} \rightarrow U)_{i \in I}$ is family of injective $\mathcal{C}^{\infty}$-functions such that, for every $i \in I$ and for every $p \in U_{i}$, the linear transformation $d(\varphi_{i})_{p}$, of the differential function $\varphi_{i}$ at the point $p$, is a bijection, and for for every $x \in U$, there exist $i \in I$ and $p \in U_{i}$ such that $x = \varphi_{i}(p)$. \\
  
  (iii) $diff$ is the class of local diffeomorphisms in $\mathbf{C}_{\infty}$, i.e., arrows $\varphi: U \rightarrow V$ in $\mathbf{C}_{\infty}$ such that, for every $p \in U$, the differential linear transformation $d\varphi_{p}$, of the function $\varphi$ at the point $p$, is a bijection. \\
  
  Then, we can verify that the the triple $(\mathbf{C}_{\infty}, O_{\infty}, diff)$ is a geometric context.

\subsection*{Analytic geometry context}
  Let $(\mathbf{C}_{h}, O_{h}, lis)$ be the triple where \\
  
  (i) $\mathbf{C}_{h}$ is the subcategory of $\mathcal{T}op$ formed by the open subsets of $\mathbb{C}^{n}$, for $n$ varying through the integers $n \geq 0$, with morphisms being holomorphic functions \\
  
  (ii) $O_{h}$ is the Grothendieck topology generated by the open coverings in $\mathbf{C}_{h}$; \\
  
  (iii) $et_{h}$ is the class of locally biholomorphisms. \\
  
The, we can verify that the triple $(\mathbf{C}_{h}, O_{h}, lis)$ is a geometric context.

\subsection*{Algebraic geometry context I}

Let $(Aff, fpqc, fppf)$ be the triple where \\

(i) $Aff$ is the dual category of the category of commutative rings which are of finite presentation \footnote{Actually, the category $Aff$ admits a small skeleton, given by the rings of the form $\mathbb{Z}[T_{1},...,T_{n}]/(P_{1},...,P_{m})$, where $P_{1},...,P_{m}$ are polynomials in $\mathbb{Z}[T_{1},...,T_{n}]$.}. \\

(ii) $fpqc$ is the faithful-flat-quasi-compact Grothendieck topology on $Aff$; \\

(iii) $fppf$ is the class of faithful-flat-finite-presentation morphisms in $Aff$. \\

The, we can verify that $(Aff,fpqc, fppf)$ is a geometric context \footnote{We indicate the references [1] and [5] for the definitions of the topologies $fpqc$ (faithful-flat-quasi-compact) $fppf$ (faithful-flat-finite-presentation), $et$ (\'{e}tale) and $zar$ (Zariski). A definition of the Niesnevich topology $nis$ can be found in [7].}.

\subsection*{Algebraic geometry context II}

Let $(Aff, et, lis)$ be the triple where \\

(i) $Aff$ is the dual category of the category of commutative rings which are of finite presentation. \\

(ii) $et$ is the \'{e}tale Grothendieck topology on $Aff$; \\

(iii) $lis$ is the class of smooth morphisms in $Aff$. \\

The, we can verify that $(Aff,et, lis)$ is a geometric context.

\subsection*{Algebraic geometry context III}

Let $(Aff, nis, lis)$ be the triple where \\

(i) $Aff$ is the dual category of the category of commutative rings which are of finite presentation. \\

(ii) $nis$ is the Niesnevich Grothendieck topology on $Aff$; \\

(iii) $lis$ is the class of smooth morphisms in $Aff$. \\

The, we can verify that $(Aff,nis, lis)$ is a geometric context. 

\subsection*{Algebraic geometry context IV}

Let $(Aff, et, et)$ be the triple where \\

(i) $Aff$ is the dual category of the category of commutative rings which are of finite presentation. \\

(ii) $et$ is the \'{e}tale topology on $Aff$; \\

(iii) $et$ is the class of \'{e}tale morphisms in $Aff$. \\

The, we can verify that $(Aff,et, et)$ is a geometric context. \\

In general, we can proof without difficult the following \footnote{In order to proof (\ref{Geometric Site}), regard that open immersions are cartesian in $\mathcal{T}op$, and any open immersion in $\mathbf{C}$ is an element of $\mathbf{P}$, which implies (using (2) of (i)) that the open immersions are also cartesian in $\mathbf{C}$, and using these facts, we can verify (GC4), (GC5) and (GC6) (after verifying the first three conditions of (\ref{Geometric context}), which are straightforward).}

\begin{proposition}\label{Geometric Site}
  Let $(\mathbf{C}, J, \mathbf{P})$ be a triple where \\

  (i) $\mathbf{C}$ is a small (not necessarily full) subcategory of $\mathcal{T}op$ such that
  \begin{enumerate}
    \item $\mathbf{C}$ admits finite products
    \item If $U \hookrightarrow V$ is an open immersion and $V \in Ob(\mathbf{C})$, then $U \in Ob(\mathbf{C})$ 
    
\end{enumerate}    
  
   (ii)  $J$ be the Grothendieck topology generated by the open coverings in $\mathbf{C}$  \\
   
   (iii)  $\mathbf{P}$ be the class of local morphisms in $\mathbf{C}$, i.e., arrows $\varphi: U \rightarrow V$ in $\mathbf{C}$ satisfying the following condition: for each $x \in U$, there exist open subsets $U' \subseteq U$ and $V' \subseteq V$, with $x \in U'$ and $\varphi(x) \in V'$, such that $\varphi': U' \rightarrow V'$ is invertible in $\mathbf{C}$, where $\varphi'$ denotes the restriction of $\varphi$ to $U'$. \\
   
    Therefore, with the previous notations, we can verify that the triple $(\mathbf{C}, J,\mathbf{P})$ is a geometric context.

\end{proposition}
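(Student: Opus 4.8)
\noindent\emph{Proof plan.} The plan is to verify the six axioms of Definition~\ref{Geometric context} one at a time. Axiom (GC1) is nothing but hypothesis (i)(1), so the real work is in (GC2)--(GC6), and all of it will rest on one structural observation that I would establish first: \emph{every open immersion between objects of $\mathbf{C}$ is a cartesian $\mathbf{P}$-monomorphism of $\mathbf{C}$, and, more generally, the pullback of such a morphism formed in $\mathbf{C}$ coincides with the one formed in $\mathcal{T}op$.} To prove this I would recall that in $\mathcal{T}op$ the pullback of an open immersion $U\hookrightarrow V$ along an arbitrary continuous map $f\colon W\to V$ is the open subset $f^{-1}(U)\hookrightarrow W$ equipped with the corestriction of $f$; by hypothesis (i)(2) the space $f^{-1}(U)$ again lies in $Ob(\mathbf{C})$, the inclusion $f^{-1}(U)\hookrightarrow W$ and the corestriction $f^{-1}(U)\to U$ are again morphisms of $\mathbf{C}$, and the universal property transfers verbatim from $\mathcal{T}op$ because $U\hookrightarrow V$ is monic. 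An open immersion is automatically a $\mathbf{P}$-morphism (it is already an isomorphism onto an open subset) and a monomorphism (it is injective).

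Granting the lemma, (GC2) becomes routine: by the standing convention it is enough to test the sheaf axiom for each representable $h_W$ against an open covering $(U_i\hookrightarrow U)_{i\in I}$, and there the fibre products $U_{ij}=U_i\times_U U_j$ are, by the lemma, the open intersections $U_i\cap U_j$, so a compatible family $s_i\colon U_i\to W$ in $\mathbf{C}$ glues to a unique continuous $s\colon U\to W$ which lies in $\mathbf{C}$ because membership in $\mathbf{C}$ is a local condition on the source. For (GC3) I would note that identities are local by inspection, that composition of local morphisms is a direct local check, and that for stability under base change one takes a cartesian square in $\mathbf{C}$ with $\varphi\in\mathbf{P}$, covers the source of the pulled-back morphism $\varphi'$ by opens over which $\varphi$ restricts to an isomorphism onto an open subset, and invokes the universal property of the $\mathbf{C}$-pullback to build a local inverse of $\varphi'$ over the matching opens in the base; this exhibits $\varphi'$ as locally an isomorphism onto an open subset, hence $\varphi'\in\mathbf{P}$.

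For (GC6), a $\mathbf{P}$-morphism $\varphi\colon U\to V$ comes, by definition, with an open covering $(U_x\hookrightarrow U)_{x\in U}$ on which $\varphi$ restricts to isomorphisms $U_x\xrightarrow{\sim}V_x$ onto open subsets of $V$; each $U_x\hookrightarrow U$ is a $\mathbf{P}$-morphism, and each composite $\varphi\circ(U_x\hookrightarrow U)$ equals $U_x\xrightarrow{\sim}V_x\hookrightarrow V$, a composite of an isomorphism and an open immersion, hence cartesian by the lemma together with the stability of cartesian morphisms under composition. Axiom (GC5) is then immediate: every $J$-covering is refined by an open covering, and open immersions are $\mathbf{P}$-morphisms (indeed the identities themselves are $\mathbf{P}$-morphisms). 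For (GC4) I would use that a $J$-covering $(\rho_i\colon U_i\to U)_{i\in I}$ is jointly surjective, so if each $\rho_i$ and each $\varphi\circ\rho_i$ lies in $\mathbf{P}$ then, near any point of $U$ lying in the image of some $\rho_i$, the morphism $\varphi$ agrees with $(\varphi\circ\rho_i)\circ\rho_i^{-1}$ read off the local isomorphisms, so $\varphi$ is locally an isomorphism onto an open subset and $\varphi\in\mathbf{P}$.

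The step I expect to be the main obstacle is the structural lemma itself --- specifically, the reconciliation of fibre products taken in $\mathbf{C}$ with those taken in $\mathcal{T}op$ whenever one leg is an open immersion (or, locally, a $\mathbf{P}$-morphism). This is the one place where hypothesis (i)(2) genuinely enters, and it rests on the tacit --- though in all the listed examples automatic --- facts that inclusions of open subsets between objects of $\mathbf{C}$ are morphisms of $\mathbf{C}$, that corestrictions of $\mathbf{C}$-morphisms along open immersions are again morphisms of $\mathbf{C}$, and that being a morphism of $\mathbf{C}$ can be checked on an open cover of the source. Once these are in place, each verification above reduces to an elementary local computation in $\mathcal{T}op$, precisely as the footnote anticipates.
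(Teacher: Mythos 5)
Your proof follows essentially the same route as the paper's own (footnote) argument: the key observation in both is that, thanks to hypothesis (i)(2), open immersions between objects of $\mathbf{C}$ are cartesian $\mathbf{P}$-morphisms whose pullbacks agree with those computed in $\mathcal{T}op$, after which (GC1)--(GC6) reduce to elementary local verifications. Your explicit flagging of the tacit closure hypotheses (that open inclusions and corestrictions along them are $\mathbf{C}$-morphisms, and that membership in $Fl(\mathbf{C})$ is local on the source) is a useful precision that the paper leaves implicit.
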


From the above proposition, one can verify that the topological, differential and analytic examples we have exposed, are actually geometric contexts according to the definition (\ref{Geometric context}).

\section{The Language of Elementary Schemes}

In this section, we develop the language of elementary schemes in a geometric context.

\begin{definition}
   Let $(C,J,\mathbf{P})$ be a geometric context and 
      $$\mathcal{U} =(p_{i}: F_{i} \longrightarrow F)_{i \in I}$$
       be a family of sheaf morphisms in $Sh(C,J)$. We say that $\mathcal{U}$ is a \emph{covering} if the canonical induced arrow
        $$ \coprod_{i \in I} F_{i} \longrightarrow F $$
is an epimorphism of sheaves. We say that $\mathcal{U}$ is a \emph{open covering} if it is a covering and each one of the morphisms $p_{i}$, for $i \in I$, are open immersions. Finally, we say that $\mathcal{U}$ is an \emph{open atlas} if it is an open covering and each one of the sheaves $F_{i}$, for $i \in I$, is representable by an object $U_{i}$ of $C$.

\end{definition}

Therefore, to give an \emph{open atlas} of a sheaf $X$ in a geometric context $(C,J,\mathbf{P})$ means to give a \emph{sheaf epimorphism} of the form
       $$ \coprod_{i \in I} h_{U_{i}} \longrightarrow X, $$
for which the canonical arrows $p_{j}: h_{U_{j}} \rightarrow X$, $j \in I$ (such that the diagrams
\[
\xymatrix{
   h_{U_{j}} \ar[d] \ar[dr]^{p_{j}}    &   \\
  \coprod_{i \in I} h_{U_{i}} \ar[r]_{p}  &  X
}
\]
commute), are all \emph{open immersions of sheaves}. We recall that to give a $J$-covering $(\rho_{i}:U_{i} \rightarrow U)_{i \in I}$ in $C$ is equivalent to give an epimorphism of sheaves of the form
     $$ \coprod_{i \in I} h_{U_{i}} \longrightarrow h_{U}. $$
We say that a $J$-covering as above is a $\mathbf{P}$-\emph{covering} (resp. $\mathbf{P}$-\emph{open covering}) when $\rho_{i}$ is a $\mathbf{P}$-morphism (resp. a $\mathbf{P}$-monomorphism) for each $i \in I$. Hence, it follows from (\ref{Open immersion equivalence}) that to give a $\mathbf{P}$-\emph{open covering} $(\rho_{i}: U_{i} \rightarrow U)_{i \in I}$ is equivalent to give an \emph{open atlas} of the form 
    $$ \coprod_{i \in I} h_{U_{i}} \rightarrow h_{U}. $$

\begin{definition}\label{S-morphism of sheaves}
  Let $(C,J,\mathbf{P})$ be a geometric context and $S$ be a set of arrows in $C$. A morphism $f: F \rightarrow G$ of $Sh(C,J)$ is called a $S$-morphism if for every arrow of the form $h_{U} \rightarrow G$, with $U \in Ob(C)$, there exists an \emph{open atlas} 
       $$ \coprod_{i \in I} h_{U_{i}} \longrightarrow F \times_{G} h_{U}, $$
such that the morphisms $U_{i} \rightarrow U$, induced from the canonical compositions
      $$ h_{U_{i}} \rightarrow \coprod_{i \in I} h_{U_{i}} \longrightarrow F \times_{G} h_{U} \longrightarrow h_{U_{i}} $$
are all $S$-morphisms in $C$.

\end{definition}

\begin{proposition}
   Let $(C,J,\mathbf{P})$ be a geometric context and $S$ be a class of arrows in $C$ satisfying the following conditions
\begin{enumerate}
 \item $S$ is stable by base change.
 \item $S$ is $J$-local: if $\varphi: U \rightarrow V$ is a morphism of $C$ and there exists a $J$-covering $(\rho_{i}: U_{i} \rightarrow U)_{i \in I}$ such that $\rho_{i} \in \mathbf{P}$ and $\varphi \circ \rho_{i} \in S$ for all $i \in I$, then $\varphi \in S$.
 \item $S$ is locally cartesian: if $\varphi: U \rightarrow V$ is a $S$-morphism then, there exists a $\mathbf{P}$-open covering $(\rho_{i}: U_{i} \rightarrow U)_{i \in I}$ such that $\varphi \circ \rho_{i} \in S$ and $\varphi \circ \rho_{i}$ is cartesian for every $i \in I$.
 
\end{enumerate}   
 Therefore, an arrow $\varphi: U \rightarrow V$ of $C$ is a $S$-morphism if, and only if, $h_{\varphi}: h_{U} \rightarrow h_{V}$ is a $S$-morphism of sheaves in the sense of (\ref{S-morphism of sheaves}).

\end{proposition}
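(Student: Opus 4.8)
The plan is to follow the pattern of the proof of Proposition~\ref{Open immersion equivalence}, using the three hypotheses on $S$ as analogues of (GC4), (GC5) and (GC6): hypothesis (2) will play the role of (GC4), and hypothesis (3) the role of (GC6). Throughout, recall that by subcanonicity of $J$ the representable presheaves $h_U$, $h_V$, $h_W$ are sheaves, that $Sh(C,J)$ is closed under finite limits in $\widehat{C}$, that the Yoneda embedding preserves pullbacks, and that in the topos $Sh(C,J)$ epimorphisms are stable under base change and colimits are universal.

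For the ``only if'' direction, suppose $\varphi: U \to V$ is an $S$-morphism in $C$. By hypothesis (3) there is a $\mathbf{P}$-open covering $(\rho_i: U_i \to U)_{i \in I}$ with each $\varphi \circ \rho_i \in S$ and cartesian. Given an arrow $h_W \to h_V$, corresponding to $w: W \to V$ in $C$, the cartesianness of $\varphi \circ \rho_i$ guarantees that the pullbacks $W_i := U_i \times_V W$ exist in $C$. Writing $P := h_U \times_{h_V} h_W$, I would base change the open immersion $h_{\rho_i}: h_{U_i} \to h_U$ (an open immersion by Proposition~\ref{Open immersion equivalence}) along $P \to h_U$; since Yoneda preserves pullbacks, the result is identified with an open immersion $h_{W_i} \to P$, and since $\coprod_i h_{U_i} \to h_U$ is a sheaf epimorphism (as $(\rho_i)$ is a $J$-covering and $J$ is subcanonical), stability of epimorphisms under base change together with universality of colimits yields that $\coprod_i h_{W_i} \to P$ is a sheaf epimorphism. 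Hence $(h_{W_i} \to P)_{i \in I}$ is an open atlas of $h_U \times_{h_V} h_W$, and the induced composite $h_{W_i} \to P \to h_W$ is the canonical projection, corresponding in $C$ to the projection $W_i = U_i \times_V W \to W$, i.e. the base change of $\varphi \circ \rho_i \in S$ along $w$, which lies in $S$ by hypothesis (1). This is exactly Definition~\ref{S-morphism of sheaves} for $h_\varphi$.

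For the ``if'' direction, suppose $h_\varphi: h_U \to h_V$ is an $S$-morphism of sheaves. Applying Definition~\ref{S-morphism of sheaves} to the identity $h_V \to h_V$ (i.e. to the case $W = V$) and using $h_U \times_{h_V} h_V \cong h_U$, we obtain an open atlas $\coprod_{i \in I} h_{U_i} \to h_U$ whose induced arrows $U_i \to V$, each of the form $\varphi \circ \psi_i$ with $\psi_i: U_i \to U$ the structural map, all lie in $S$. By Proposition~\ref{Open immersion equivalence} each $\psi_i$ is a $\mathbf{P}$-monomorphism, and since $\coprod_i h_{U_i} \to h_U$ is a sheaf epimorphism with $J$ subcanonical, $(\psi_i: U_i \to U)_{i \in I}$ is a $J$-covering. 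Thus $(\psi_i)$ is a $J$-covering with $\psi_i \in \mathbf{P}$ and $\varphi \circ \psi_i \in S$ for all $i$, whence $\varphi \in S$ by hypothesis (2).

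The main obstacle is the bookkeeping in the ``only if'' direction: one must check that base changing the representable atlas $(h_{U_i} \to h_U)$ along $P \to h_U$ again produces a \emph{representable} open atlas, and this is precisely where hypothesis (3) enters, to force $\varphi \circ \rho_i$ cartesian so that $W_i = U_i \times_V W$ exists inside $C$ rather than merely in the topos. All the remaining verifications are formal consequences of Yoneda, subcanonicity, and the exactness properties of $Sh(C,J)$ already recorded in the excerpt.
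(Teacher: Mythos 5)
Your proof is correct and follows essentially the same route as the paper: for the ``if'' direction you test against the identity of $h_V$ and invoke $J$-locality of $S$, and for the ``only if'' direction you use local cartesianness of $S$ to produce a representable $\mathbf{P}$-open covering whose pullback along $h_U\times_{h_V}h_W\to h_U$ gives the required open atlas, with stability of $S$ under base change finishing the argument. The only difference is cosmetic: you make explicit the identification $h_{W_i}\cong h_{U_i}\times_{h_U}(h_U\times_{h_V}h_W)$ and the role of subcanonicity, which the paper leaves implicit.
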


\begin{proof}
 First, suppose that $h_{\varphi}: h_{U} \rightarrow h_{V}$ is a $S$-morphism of sheaves in the sense of (\ref{S-morphism of sheaves}). Then, there exists an open atlas 
               $$ \coprod_{i \in I} h_{U_{i}} \longrightarrow h_{U}, $$
such that each one of the arrows $U_{i} \rightarrow U$, induced by the evident compositions, are $S$-morphisms. Since the previous open atlas defines a $J$-covering $(\rho_{i}: U_{i} \rightarrow U)_{i \in I}$ such that $\rho_{i} \in \mathbf{P}$ and $\varphi \circ \rho_{i} \in S$ for every $i \in I$, then, by the $J$-locality of $S$, $\varphi \in S$. Now, suppose that $\varphi: U \rightarrow V$ is a $S$-morphism in $C$. Given an arrow $h_{W} \rightarrow h_{V}$, which corresponds, then, to an arrow $\psi: W \rightarrow V$ in $C$, we have to show that there is an open atlas of $h_{U} \times_{h_{V}} h_{W}$ satisfying the conditions of the definition (\ref{S-morphism of sheaves}). From the fact that $S$ is locally cartesian, there exists a $\mathbf{P}$-open covering $(\rho_{i}: U_{i} \rightarrow U)_{i \in I}$ such that $\varphi \circ \rho_{i} \in S$ and $\varphi \circ \rho_{i}$ is cartesian for all $i \in I$, , which implies in the existence of cartesian squares of the form
\[
\xymatrix{
    W_{i} \ar[r]^{\varphi_{i}'} \ar[d]_{\psi_{i}}     &    W \ar[d]^{\psi} \\
    U_{i}  \ar[r]_{\varphi_{i}}                       &    V,
}
\]
where $\varphi_{i}= \varphi \circ \rho_{i}$ and $\varphi_{i}' \in S$ (since $S$ is stable by base change). From the previous cartesian squares and from the universal property of fibred products, we can deduce easily a family of diagrams
\[
\xymatrix{
h_{W_{i}} \ar[r] \ar[d]  &  h_{U} \times_{h_{V}} h_{W} \ar[r] \ar[d]   &     h_{W} \ar[d] \\
h_{U_{i}} \ar[r]         &  h_{U} \ar[r]                               &     h_{V}
}
\]
in $Sh(C,J)$, where all the squares are cartesian (including the external one). Using the fact that epimorphisms in any topos are stable by base change, we can verify that the canonical arrow
       $$ p:\coprod_{i \in I} h_{W_{i}} \longrightarrow h_{U} \times_{h_{V}} h_{W} $$
is an epimorphism of sheaves. Moreover, $p$ is also an open atlas, because in the cartesian squares
\[
\xymatrix{
  h_{W_{i}} \ar[r] \ar[d]   &   h_{U} \times_{h_{V}} h_{W} \ar[d] \\
  h_{U_{i}}       \ar[r]    &   h_{U},
}
\]
the arrows $h_{U_{i}} \rightarrow h_{U}$ are all open immersions, and open immersions of sheaves are stable by base change. Hence, there exists an open atlas of $h_{U} \times_{h_{V}} h_{W}$ satisfying the conditions of (\ref{S-morphism of sheaves}), from where we conclude that $h_{\varphi}: h_{U} \rightarrow h_{V}$ is actually a $S$-morphism.

\end{proof}

\begin{corollary}
Let $(C,J,\mathbf{P})$ be a geometric context and $\varphi: U \rightarrow V$ an arrow of $C$. Then, $\varphi$ is a $\mathbf{P}$-morphism in $C$ if, and only if, $h_{\varphi}: h_{U} \rightarrow h_{V}$ is a $\mathbf{P}$-morphism of sheaves in the sense of (\ref{S-morphism of sheaves}).

\end{corollary}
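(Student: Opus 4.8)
The plan is to obtain this corollary as the special case $S=\mathbf{P}$ of the preceding proposition, so the whole task reduces to verifying that the class $\mathbf{P}$ of a geometric context satisfies the three hypotheses imposed on $S$ there. Two of these are immediate: hypothesis (1), stability under base change, is the second clause in the definition of an admissible class, hence is part of (GC3); and hypothesis (2), $J$-locality relative to $\mathbf{P}$, is verbatim the axiom (GC4). Everything therefore comes down to hypothesis (3) for $S=\mathbf{P}$.

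For (3) I would start from a $\mathbf{P}$-morphism $\varphi\colon U\to V$ and invoke (GC6) to obtain a $J$-covering $(\rho_{i}\colon U_{i}\to U)_{i\in I}$ with each $\rho_{i}\in\mathbf{P}$ and each composite $\varphi\circ\rho_{i}$ cartesian; by stability under composition (GC3) the $\varphi\circ\rho_{i}$ automatically lie in $\mathbf{P}$, so the clause ``$\varphi\circ\rho_{i}\in S$'' of the proposition is free. The delicate point — the one I expect to be the real obstacle — is that the proposition asks for a $\mathbf{P}$-\emph{open} covering, i.e. one whose members are $\mathbf{P}$-\emph{mono}morphisms (equivalently, an open atlas of $h_{U}$), whereas (GC6) as stated supplies only $\mathbf{P}$-morphisms. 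I would try to close this gap by passing to a further $\mathbf{P}$-open refinement, using (GC5) together with the stability of $\mathbf{P}$-monomorphisms under base change and the fact that they contain all isomorphisms; this is the step I would have to think hardest about. It is reassuring that in every concrete example (GC6) is in fact already witnessed by a covering of this stronger kind: by the trivial covering when every arrow of $C$ is already cartesian — as in the affine-scheme contexts, where fibred products always exist in $C$ — and by a covering by open immersions in the topological, differentiable and analytic contexts, where a local homeomorphism, local diffeomorphism or local biholomorphism restricts, locally on its source, to an isomorphism onto an open subset, hence to a cartesian arrow.

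With (1)--(3) established for $S=\mathbf{P}$, the preceding proposition delivers both implications of the corollary at once. As an independent check of the implication ``$h_{\varphi}$ a $\mathbf{P}$-morphism of sheaves $\Rightarrow \varphi\in\mathbf{P}$'' — the one not needing (GC6) — one may argue directly: feeding the test arrow $Id\colon h_{V}\to h_{V}$ into Definition~\ref{S-morphism of sheaves} produces an open atlas of $h_{U}\times_{h_{V}}h_{V}\cong h_{U}$, that is, a $\mathbf{P}$-open covering $(\rho_{i}\colon U_{i}\to U)_{i\in I}$ of $U$ along which every $\varphi\circ\rho_{i}$ is a $\mathbf{P}$-morphism, whence $\varphi\in\mathbf{P}$ by (GC4). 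This isolates (GC6) and the refinement above as the only nontrivial ingredient, namely for the converse implication.
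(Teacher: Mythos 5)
Your overall strategy --- specializing the preceding proposition to $S=\mathbf{P}$ --- is exactly the paper's intended route (the paper states this corollary with no separate argument), and your dispatch of hypotheses (1) and (2) via (GC3) and (GC4), as well as your direct check of the implication ``$h_{\varphi}$ a $\mathbf{P}$-morphism of sheaves $\Rightarrow\varphi\in\mathbf{P}$'' using the test arrow $Id\colon h_{V}\to h_{V}$ and (GC4), are correct. The gap is, however, exactly where you suspect it is, and your proposed repair does not close it. Hypothesis (3) of the proposition demands a $\mathbf{P}$-\emph{open} covering, i.e.\ a $J$-covering by $\mathbf{P}$-\emph{mono}morphisms (equivalently, by (\ref{Open immersion equivalence}), an open atlas of $h_{U}$), and this strength is genuinely used in the proposition's proof: the base change of that covering along $h_{U}\times_{h_{V}}h_{W}\to h_{U}$ must again be an \emph{open} atlas, since Definition (\ref{S-morphism of sheaves}) asks for one. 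But (GC6) produces only a covering by $\mathbf{P}$-morphisms, and so does (GC5): the refining arrows $\varphi_{i}\colon V_{i}\to U$ of (GC5) are merely required to lie in $\mathbf{P}$, with no monomorphy clause. So neither axiom, nor their combination with the stability of $\mathbf{P}$-monomorphisms under base change, yields the required $\mathbf{P}$-open covering; the step you say you ``would have to think hardest about'' is left genuinely unproved.

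Concretely, what is missing is a proof (or an axiom guaranteeing) that every $\mathbf{P}$-morphism $\varphi\colon U\to V$ admits a $J$-covering of $U$ by $\mathbf{P}$-monomorphisms $\rho_{i}$ such that each $\varphi\circ\rho_{i}$ is cartesian. This does hold in all the listed examples --- trivially in the affine contexts, where $C$ has all fibred products, every arrow is cartesian and $\{Id_{U}\}$ suffices, and via restriction to small open subsets in the topological, differential and analytic contexts --- but it is not a formal consequence of (GC1)--(GC6) as stated, and iterating (GC6) does not help, since the new covering arrows it produces are again only $\mathbf{P}$-morphisms. To make the corollary airtight one must either strengthen (GC6) so that the covering it supplies consists of $\mathbf{P}$-monomorphisms, or verify the condition context by context. (To be fair, this is as much a gap in the paper, which asserts the corollary without proof, as in your write-up; but your appeal to (GC5) should not be presented as though it resolves the issue.)
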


\begin{corollary}\label{Open immersion = P-monomorphism}
  A morphism of sheaves $f: F \rightarrow G$ in a geometric context $(C,J,\mathbf{P})$ is an open immersion if, and only if, it is both a monomorphism and a $\mathbf{P}$-morphism of sheaves.

\end{corollary}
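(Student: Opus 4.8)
The plan is to prove the two implications of the equivalence separately. The implication ``monomorphism and $\mathbf{P}$-morphism of sheaves $\Rightarrow$ open immersion'' is a direct unwinding of Definition~\ref{Sheaf open immersion}; the converse will use (\ref{Open immersions are mono}) for the ``monomorphism'' half and then reduce, for the ``$\mathbf{P}$-morphism of sheaves'' half, to the statement that an open immersion into a representable sheaf which is a monomorphism admits an open atlas.

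First I would do ``$\Leftarrow$''. Assume $f$ is a monomorphism and a $\mathbf{P}$-morphism of sheaves and fix an arrow $h_{U} \to G$ with $U \in Ob(C)$. Since monomorphisms are stable under base change, the projection $f' : F \times_{G} h_{U} \to h_{U}$ is a monomorphism. Because $f$ is a $\mathbf{P}$-morphism of sheaves there is an open atlas $\coprod_{i \in I} h_{U_{i}} \to F \times_{G} h_{U}$ whose legs induce $\mathbf{P}$-morphisms $U_{i} \to U$ in $C$; since $\coprod_{i} h_{U_{i}} \to F \times_{G} h_{U}$ is an epimorphism, the composite $\coprod_{i} h_{U_{i}} \to h_{U}$ has the same image as $f'$. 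Thus $f'$ is a monomorphism and the image of $\coprod_{i} h_{U_{i}} \to h_{U}$ coincides with that of $F \times_{G} h_{U} \to h_{U}$, which is exactly what Definition~\ref{Sheaf open immersion} demands, so $f$ is an open immersion.

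For ``$\Rightarrow$'', assume $f$ is an open immersion. By (\ref{Open immersions are mono}) it is a monomorphism, so fix $h_{U} \to G$ and let us produce an open atlas of $F \times_{G} h_{U}$ whose legs induce $\mathbf{P}$-morphisms to $U$. By Proposition~\ref{Stability properties of open immersions}(2) the base change $f' : F \times_{G} h_{U} \to h_{U}$ is again an open immersion, and it is a monomorphism; hence, by the Lemma preceding Proposition~\ref{Open immersion equivalence}, there is a family of $\mathbf{P}$-morphisms $(W_{i} \to U)_{i}$ in $C$ with $Im(\coprod_{i} h_{W_{i}} \to h_{U}) \cong Im(f') \cong F \times_{G} h_{U}$, so that $\coprod_{i} h_{W_{i}} \to F \times_{G} h_{U}$ is an epimorphism of sheaves. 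Using local cartesianness (GC6) I would refine this family so that each $W_{i} \to U$ is cartesian. The remaining task is to upgrade this covering to an actual open atlas: once $F \times_{G} h_{U}$ is covered by representables via open immersions, each such leg composes with the open immersion $F \times_{G} h_{U} \hookrightarrow h_{U}$ to an open immersion between representable sheaves, which by Proposition~\ref{Open immersion equivalence} is induced by a $\mathbf{P}$-monomorphism of $C$ (in particular a $\mathbf{P}$-morphism), so the atlas automatically has the desired property on its legs.

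The main obstacle is precisely this last step: showing that an open immersion $E \hookrightarrow h_{U}$ which is a monomorphism admits an open atlas. After writing $E$, via the Lemma and (GC6), as a union of images of cartesian $\mathbf{P}$-morphisms $W_{i} \to U$ — each of which is, by the Lemma applied to the singleton $\{W_{i} \to U\}$ together with Proposition~\ref{Stability properties of open immersions}(5), an open subsheaf of $E$ — one must cover each such image by representables mapping to $U$ via $\mathbf{P}$-monomorphisms. This is where the remaining axioms of a geometric context enter: that every $J$-covering admits a $\mathbf{P}$-refinement (GC5), that $Ouv$ is local (Proposition~\ref{Stability properties of open immersions}(3)), and the equivalence of Proposition~\ref{Open immersion equivalence}. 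I expect propagating the atlas through these image factorisations, without ever leaving the class of $\mathbf{P}$-monomorphisms, to be the genuinely delicate point; the rest of the argument is formal.
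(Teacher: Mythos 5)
Your ``$\Leftarrow$'' direction is complete and correct: since the atlas map $\coprod_{i} h_{U_{i}}\rightarrow F\times_{G}h_{U}$ is an epimorphism, precomposing the monomorphism $f'$ with it does not change the image, so the family $(U_{i}\rightarrow U)_{i\in I}$ of $\mathbf{P}$-morphisms witnesses Definition~\ref{Sheaf open immersion}. The problem is the ``$\Rightarrow$'' direction, and it sits exactly where you place it. For each $h_{U}\rightarrow G$ you must produce an epimorphism $\coprod_{j}h_{V_{j}}\rightarrow E$, with $E=F\times_{G}h_{U}$, whose legs are open immersions of sheaves; since $E\rightarrow h_{U}$ is a monomorphism, each leg $h_{V_{j}}\rightarrow E$ is then a monomorphism (by~\ref{Open immersions are mono}) composing to a monomorphism $h_{V_{j}}\rightarrow h_{U}$, so by Proposition~\ref{Open immersion equivalence} each $V_{j}\rightarrow U$ would have to be a $\mathbf{P}$-\emph{monomorphism}. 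But Definition~\ref{Sheaf open immersion} only hands you $\mathbf{P}$-\emph{morphisms} $(W_{i}\rightarrow U)$ with $Im(\coprod_{i}h_{W_{i}}\rightarrow h_{U})\cong E$, and none of the tools you invoke converts these into monomorphisms: (GC5) refines coverings by $\mathbf{P}$-morphisms, not $\mathbf{P}$-monomorphisms; (GC6) produces cartesian $\mathbf{P}$-morphisms, again with no injectivity; and the locality of $Ouv$ together with Proposition~\ref{Open immersion equivalence} only recognise $\mathbf{P}$-monomorphisms once you already have them. So the sentence ``one must cover each such image by representables mapping to $U$ via $\mathbf{P}$-monomorphisms'' is the entire content of the forward implication, and your proposal stops there.

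Moreover this step is not merely delicate: it cannot be closed by a purely formal argument from (GC1)--(GC6). Take $C$ a small skeleton of the category of finite groups, $J$ the topology generated by the pretopology of isomorphism singletons, and $\mathbf{P}$ the class of surjective homomorphisms; all six axioms hold (degenerately), $\mathbf{P}$-monomorphisms are isomorphisms, and sheaves are presheaves. For $\varphi:\mathbb{Z}/4\rightarrow\mathbb{Z}/2$ the proper subpresheaf $E=Im(h_{\varphi})\subsetneq h_{\mathbb{Z}/2}$ is an open immersion in the sense of Definition~\ref{Sheaf open immersion} (over any $\psi:V\rightarrow\mathbb{Z}/2$ use the single surjection $V\times_{\mathbb{Z}/2}\mathbb{Z}/4\rightarrow V$), yet $E$ admits no open atlas: a leg $h_{V_{j}}\rightarrow E$ hitting the section $\varphi\in E(\mathbb{Z}/4)$ would make $V_{j}\rightarrow\mathbb{Z}/2$ both injective and surjective, hence would split the proper monomorphism $E\hookrightarrow h_{\mathbb{Z}/2}$. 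So the forward implication genuinely needs extra input. In the paper's concrete contexts the missing covering does exist, but for context-specific reasons (the image of a local homeomorphism, resp.\ of an \'{e}tale morphism, is an open subset, resp.\ an open subscheme, hence coverable by $\mathbf{P}$-monomorphisms); to obtain the corollary in the stated generality one either needs an additional axiom guaranteeing that the image of a $\mathbf{P}$-morphism is covered by $\mathbf{P}$-monomorphisms, or a strengthening of Definition~\ref{Sheaf open immersion} requiring the covering family there to consist of $\mathbf{P}$-monomorphisms.
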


\emph{Notation and Terminology} - Given a geometric context $(C,J,\mathbf{P})$, we denote by $\widetilde{\mathbf{P}}$ (resp. $ouv$) the class of $\mathbf{P}$-morphisms (resp. open immersions) in the topos $Sh(C,J)$. Let $X$ be a sheaf over $(C,J)$. An \emph{atlas} of $X$ is a \emph{sheaf} epimorphism of the form
       $$ p: \coprod_{i \in I} h_{U_{i}} \longrightarrow X $$
such that $U_{i} \in Ob(C)$ for every $i \in I$, and the canonical induced arrows $h_{U_{i}} \rightarrow X$ are all $\mathbf{P}$-morphisms of sheaves in the sense of (\ref{S-morphism of sheaves}). \\

In the following, we define the notions of elementary schemes and geometric sheaves in a geometric context:

\begin{definition}\label{Elementary scheme, Geometric sheaf}
  Let $(C,J,\mathbf{P})$ be a geometric context. A sheaf $X$ over the site $(C,J)$ is called an \emph{elementary scheme} (resp. a geometric sheaf) in the geometric context $(C,J,\mathbf{P})$ if there exists at least one \emph{open atlas} (resp. \emph{atlas}) of the form
       $$ p: \coprod_{i \in I} h_{U_{i}} \longrightarrow X. $$

\end{definition}

\emph{Notation and Terminology} - If $(C,J,\mathbf{P})$ is a geometric context, we denote by $Sch(C,J,\mathbf{P})$ (resp. $Esg(C,J,\mathbf{P})$) the category of elementary schemes (resp. geometric sheaves) in $(C,J,\mathbf{P})$, which is the full subcategory of $Sh(C,J)$ formed by the elementary schemes (resp. geometric sheaves).
From the definition of elementary schemes, we have that any representable sheaf is an elementary scheme and any elementary scheme is a geometric sheaf. Moreover, any sheaf isomorphic to an elementary scheme (resp. a geometric sheaf) is also an elementary scheme (resp. a geometric sheaf). An elementary scheme which is representable by an object of $C$ will be called an \emph{elementary affine scheme}. Therefore, we have the following sequence of categorical immersions:
     $$ C \hookrightarrow Sch(C,J,\mathbf{P}) \hookrightarrow Esg(C,J,\mathbf{P}) \hookrightarrow Sh(C,J) \hookrightarrow \widehat{C}. $$
Given an open atlas (resp. an atlas) 
     $$ p: \coprod_{i \in I} h_{U_{i}} \longrightarrow X $$
of an elementary scheme (resp. of a geometric sheaf) $X$, we say that each one of the canonical arrows $p_{i}: h_{U_{i}} \rightarrow X$ is an \emph{open local chart} (resp. a \emph{local chart}) of $X$.

\begin{definition}\label{Schematic categories}
  A \emph{schematic category} $\mathscr{S}$ is a category equivalent to the category of elementary schemes in a geometric context, i.e., there exists a geometric context $(C,J,\mathbf{P})$ such that
                     $$\mathscr{S} \simeq Sch(C,J,\mathbf{P})  $$

\end{definition}

From the examples of geometric contexts we have presented in the first section, we can derive the following list of schematic categories:

\subsection*{Topological manifolds}       
  The category $\mathcal{M}$ of \emph{topological manifolds} is equivalent to the schematic category $Sch(\mathbf{C}_{0}, ouv, et)$. The category $Esg$ of \emph{geometric spaces} is the category of geometric sheaves in the context $(\mathbf{C}_{0}, ouv, et)$.

\subsection*{Differential manifolds}  
  The category $Diff$ of \emph{differential manifolds} is equivalent to the schematic category $Sch(\mathbf{C}_{\infty}, O_{\infty}, diff)$. The category $Es.diff$ of \emph{differential spaces} is the category of geometric sheaves in the context $(\mathbf{C}_{\infty}, O_{\infty}, diff)$.

\subsection*{Analytic manifolds}
  The category $An$ of (complex) \emph{analytic manifolds} is equivalent to the schematic category $Sch(\mathbf{C}_{h}, O_{h}, lis)$. The category $Es.an$ of (complex) \emph{analytic spaces} is the category of geometric sheaves in the context $(\mathbf{C}_{h}, O_{h}, lis)$.

\subsection*{Schemes}
  The category $Sch$ of \emph{schemes} in algebraic geometry is equivalent to the schematic category $Sch(Aff, et, lis)$. The category $Es.alg$ of \emph{algebraic spaces} is the category of geometric sheaves in the context $(Aff,et,lis)$.

\begin{remark}
  A schematic category $\mathscr{S}$ can be presented by more than one geometric context (just like a topos can be defined as a category of sheaves for different sites). For example, the category of schemes $Sch$ in algebraic geometry can be defined as the (same) category of elementary schemes in all the following geometric contexts:
 \begin{enumerate}
    \item $(Aff, fpqc, fppf)$
    \item $(Aff, et, lis)$
    \item $(Aff, nis, lis)$
    \item $(Aff, et, et)$
 
\end{enumerate}   
   In this sense, two geometric contexts $(C,J,\mathbf{P})$ and $(C',J',\mathbf{P}')$ are \emph{Morita-equivalent} if $Sch(C,J,\mathbf{P}) \simeq Sch(C',J',\mathbf{P}')$, similarly to the Morita-equivalence of sites, which currently occupies a crucial place in topos theory in the light of Caramello.  In order to have an intrinsic study of schematic categories, we need to be able to deduce a list of `Giraud axioms' for schematic categories, analogously the case of topos theory, where the Giraud axioms (the Lawvere-Tierney axioms, for an elementary topos), provides an intrinsic categorical characterization of a topos. We ignore if there exists such characterization, but we will see in the end of this section that the elementary schemes can be described as certain colimits of diagrams in the category $C$. 

\end{remark}

\begin{definition}\label{Schemeatic morphism}
 Let $(C,J,\mathbf{P})$ be a geometric context. A morphism $f: F \rightarrow G$ of sheaves over the site $(C,J)$ is called \emph{schematic} (resp. \emph{geometric}, \emph{affine}) if for every arrow of the form $h_{U} \rightarrow G$, with $U \in Ob(C)$, the sheaf $F \times_{G} h_{U}$ is an elementary scheme (resp. a geometric space, an affine scheme).

\end{definition}

\begin{lemma}\label{Schematic lemma}
   Let $(C,J,\mathbf{P})$ be a geometric context and $f: F \rightarrow X$ be a morphism of sheaves over the site $(C,J)$. If $X$ is an elementary scheme and there exists an open atlas 
       $$ p: \coprod_{i \in I} h_{U_{i}} \longrightarrow X $$
such that $F \times_{X} h_{U_{i}}$ is an elementary scheme for every $i \in I$, then $F$ is also an elementary scheme.

\end{lemma}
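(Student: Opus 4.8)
The goal is to assemble an open atlas for $F$ out of open atlases for the pieces $F \times_X h_{U_i}$. First I would fix, for each $i \in I$, an open atlas
$$ q_i \colon \coprod_{a \in A_i} h_{V_{i,a}} \longrightarrow F \times_X h_{U_i}, $$
which exists by hypothesis since each $F \times_X h_{U_i}$ is an elementary scheme. Composing $q_i$ with the canonical projection $F \times_X h_{U_i} \to F$ (the base change of $p_i \colon h_{U_i} \to X$ along $f$, hence the arrow $g'$ in the pullback square) gives arrows $h_{V_{i,a}} \to F$. Forming the disjoint union over the index set $A =_{df} \bigsqcup_{i \in I} A_i$, I obtain a canonical arrow
$$ r \colon \coprod_{\alpha \in A} h_{V_\alpha} \longrightarrow F, $$
and the claim is that $r$ is an open atlas of $F$, which by Definition \ref{Elementary scheme, Geometric sheaf} proves $F$ is an elementary scheme. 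Two things must be checked: that $r$ is a sheaf epimorphism, and that each canonical arrow $h_{V_\alpha} \to F$ is an open immersion of sheaves.

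**Epimorphism.** For surjectivity I would argue in two stages. Each $q_i$ is an epimorphism onto $F \times_X h_{U_i}$, so $\coprod_i \coprod_{a \in A_i} h_{V_{i,a}} \to \coprod_i (F \times_X h_{U_i})$ is an epimorphism (coproducts of epimorphisms are epimorphisms in a topos). On the other hand, $p$ is an epimorphism $\coprod_i h_{U_i} \to X$, and since epimorphisms in a topos are stable by base change, the induced arrow $\coprod_i (F \times_X h_{U_i}) \to F$ — which is the base change of $p$ along $f$, identifying $\coprod_i (F \times_X h_{U_i})$ with $F \times_X \coprod_i h_{U_i}$ via commutation of pullback with coproduct — is an epimorphism. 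Composing the two epimorphisms yields that $r$ is an epimorphism, as required.

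**Open immersions.** It remains to see that each composite $h_{V_\alpha} \to F$ (for $\alpha = (i,a)$) is an open immersion of sheaves. The arrow factors as $h_{V_{i,a}} \to F \times_X h_{U_i} \xrightarrow{g'} F$. The first arrow is an open local chart of $F \times_X h_{U_i}$, hence an open immersion by Definition \ref{Elementary scheme, Geometric sheaf}. The second arrow $g' \colon F \times_X h_{U_i} \to F$ is the base change of $p_i \colon h_{U_i} \to X$ along $f$; since $p_i$ is an open immersion (it is an open local chart of the elementary scheme $X$) and open immersions are stable by base change (part (2) of Proposition \ref{Stability properties of open immersions}), $g'$ is an open immersion too. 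Open immersions are stable under composition (part (4) of Proposition \ref{Stability properties of open immersions}), so $h_{V_\alpha} \to F$ is an open immersion. This completes the verification that $r$ is an open atlas.

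**Main obstacle.** The only genuinely delicate point is the bookkeeping that identifies $\coprod_i (F \times_X h_{U_i})$ with $F \times_X \big(\coprod_i h_{U_i}\big)$ and checks that the resulting arrow to $F$ really is the base change of $p$, so that base-change stability of epimorphisms applies cleanly; this is the commutation of fibred products with coproducts in the topos $Sh(C,J)$, which holds since colimits are universal in a topos. Everything else is a direct application of the stability properties already recorded in Proposition \ref{Stability properties of open immersions} together with the fact that epimorphisms in a topos are stable by base change, both quoted in the excerpt.
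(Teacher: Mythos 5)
Your proposal is correct and follows essentially the same route as the paper's own proof: base-change the atlas $p$ along $f$ to get an epimorphism $\coprod_{i} (F \times_{X} h_{U_{i}}) \rightarrow F$ whose components are open immersions, then compose with the open atlases of the fibres $F \times_{X} h_{U_{i}}$, using stability of epimorphisms under base change and coproducts and stability of open immersions under base change and composition. The bookkeeping point you flag (commutation of fibred products with coproducts) is handled implicitly in the paper by the same appeal to the Cartesian square over $\coprod_{i} h_{U_{i}}$.
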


\begin{proof}
  Under the hypothesis of the lemma, we have the Cartesian square
\[
\xymatrix{
  \coprod_{i \in I} F \times_{X} h_{U_{i}} \ar[r] \ar[d]_{p'}   &  \coprod_{i \in I} h_{U_{i}} \ar[d]^{p} \\
   F \ar[r]_{f}                                                 &   X
}
\]
with $p$ being an epimorphism, which implies that $p'$ is also an epimorphism (since in any topos, epimorphisms are stable under base change). Moreover, it follows from the fact that open immersions are stable under base change, that the arrows $p'_{i}: F \times_{X} h_{U_{i}} \rightarrow F$, induced from $p'$, are all open immersions, because that is the case for the arrows $p_{i}: h_{U_{i}} \rightarrow X$ induced from $p$. Beyond that, there exists, for each $i \in I$, an open atlas
        $$ r_{i}: \coprod_{a \in A_{i}} h_{U_{i,a}} \longrightarrow F \times_{X} h_{U_{i}}, $$
such that the arrows $r_{i,a}: h_{U_{i,a}} \rightarrow F \times_{X} h_{U_{i}}$, induced by $r_{i}$, are all open immersions, which implies in the existence of a canonical arrow
    $$ r: \coprod_{i \in I} (\coprod_{a \in A_{i}} h_{U_{i,a}}) \longrightarrow \coprod_{i \in I} F \times_{X} h_{U_{i}}. $$
With the previous notations, we can verify that $r$ is also an epimorphism, and the composed arrows 
$$p''_{i,a}: h_{U_{i,a}} \xrightarrow{r_{i,a}} F \times_{X} h_{U_{i}} \xrightarrow{p'_{i}} F$$
are open immersions (since open immersions of sheaves are stable by composition). Then, defining the set
         $$ A =_{df} \coprod_{i \in I} A_{i} $$
we have the composed epimorphism
     $$p'': \coprod_{\alpha \in A} h_{U_{\alpha}} \xrightarrow{r}  \coprod_{i \in I} F \times_{X} h_{U_{i}} \xrightarrow{p'} F, $$
where the morphisms $p''_{\alpha}:h_{U_{\alpha}} \rightarrow F$, induced from $p''$, are all open immersions, for they coincide with the preceding composed morphisms $p''_{i,a}: h_{U_{i,a}} \rightarrow F \times_{X} h_{U_{i}} \rightarrow F$ (for $\alpha$ denoting the pair $(i,a)$ in $A$). Therefore, the sheaf $F$ admits an open atlas given by
     $$ p'': \coprod_{\alpha \in A} h_{U_{\alpha}} \longrightarrow F, $$
from where we conclude that $F$ is actually an elementary scheme. 

\end{proof}

\begin{corollary}
 Let $(C,J,\mathbf{P})$ be a geometric context and $f: F \rightarrow X$ be a morphism of sheaves over the site $(C,J)$. If $X$ is an elementary scheme  and $f$ is schematic, then $F$ is also an elementary scheme.

\end{corollary}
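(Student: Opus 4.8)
The plan is to reduce the statement directly to Lemma \ref{Schematic lemma}. Since $X$ is an elementary scheme, by Definition \ref{Elementary scheme, Geometric sheaf} it admits at least one open atlas
$$ p: \coprod_{i \in I} h_{U_{i}} \longrightarrow X, $$
with each $U_{i} \in Ob(C)$ and each canonical arrow $h_{U_{i}} \rightarrow X$ an open immersion of sheaves. First I would fix such an atlas; this is the only choice that needs to be made, and everything else should be forced.

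Next I would exploit the hypothesis that $f$ is schematic. By Definition \ref{Schemeatic morphism}, schematicity of $f: F \rightarrow X$ means precisely that for every arrow $h_{U} \rightarrow X$ with $U \in Ob(C)$, the fibred product $F \times_{X} h_{U}$ is an elementary scheme. Applying this to each of the structure maps $h_{U_{i}} \rightarrow X$ of the chosen atlas, I conclude that $F \times_{X} h_{U_{i}}$ is an elementary scheme for every $i \in I$. At this point the hypotheses of Lemma \ref{Schematic lemma} are met verbatim: $X$ is an elementary scheme, $p: \coprod_{i \in I} h_{U_{i}} \rightarrow X$ is an open atlas, and $F \times_{X} h_{U_{i}}$ is an elementary scheme for each $i$. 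The lemma then yields directly that $F$ is an elementary scheme, which is the desired conclusion.

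There is essentially no obstacle here — the corollary is a pure repackaging of Lemma \ref{Schematic lemma} through the definition of a schematic morphism. The only point deserving a word of care is that Definition \ref{Schemeatic morphism} quantifies over \emph{all} arrows $h_{U} \rightarrow X$ with $U$ a local model, so it applies in particular to the structure maps of any fixed open atlas of $X$; and conversely Lemma \ref{Schematic lemma} needs the condition only for the members of \emph{some} open atlas, so the match is immediate and no compatibility or gluing argument is required. Thus the proof is a two-line deduction: invoke schematicity of $f$ along the atlas of $X$, then invoke Lemma \ref{Schematic lemma}.
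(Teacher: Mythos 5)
Your proof is correct and is exactly the intended deduction: since $X$ is an elementary scheme it has an open atlas, schematicity of $f$ applied to each chart $h_{U_i}\rightarrow X$ gives that each $F\times_X h_{U_i}$ is an elementary scheme, and Lemma \ref{Schematic lemma} concludes. The paper states this corollary without proof precisely because it is this immediate specialization of the lemma, so your argument coincides with the paper's.
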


\begin{aforisma}\label{Schematic morphism digression}
  If $\mathscr{S}$ is a schematic category and $f:S' \rightarrow S$ is a schematic morphism between objects of $\mathscr{S}$, then, for every elementary scheme $p: X \rightarrow S$ over $S$, we can form an elementary scheme $p':X' \rightarrow S'$ over $S'$, where $X' = S' \times_{S} X$. In fact, let $(C,J,\mathbf{P})$ be a geometric context such that $\mathscr{S} \simeq Sch(C,J,\mathbf{P})$ and $\mathcal{E}$ be the topos $Sh(C,J)$. Considering the cartesian square
  \[
  \xymatrix{
    X' \ar[r]^{f'} \ar[d]_{p'}  &      X \ar[d]^{p} \\
    S' \ar[r]_{f}              &      S
  }
  \]
we can proof that $f'$ satisfies the hypothesis of the preceding corollary, because, for every morphism of the form $s:h_{U} \rightarrow X$, with $U \in Ob(C)$, we have the diagram
\[
\xymatrix{
  X'' \ar[r]^{f''} \ar[d]_{s'}     & h_{U} \ar[d]^{s} \\
   X' \ar[r]^{f'} \ar[d]_{p'}            &  X \ar[d]^{p} \\
   S' \ar[r]_{f}                         &  S
}
\]
in $\mathcal{E}$, where $X'' =  X' \times_{X} h_{U}$ and all the squares are cartesian (including the external one). Now, since $f$ is supposed to be schematic, $X''$ is an elementary scheme, from where we deduce that $f'$ is a schematic morphism from the sheaf $X'$ to the elementary scheme $X$, and conclude (in virtue of the previous corollary) that $X'$ is also an elementary scheme. In particular, if $f:S' \rightarrow S$ is a $\mathbf{P}$-morphism (resp. an open immersion), then the previous conclusion is also valid, for every $\mathbf{P}$-morphism (resp. open immersion) of sheaves is, by definition, schematic. 

\end{aforisma}

\begin{aforisma}
We remark that if $f: S' \rightarrow S$ is a schematic morphism, then it induces a canonical functor
     $$ f^{\ast}: \mathscr{S}_{/S} \longrightarrow \mathscr{S}_{/S'} $$
which admits a right adjoint
    $$ f_{\ast}: \mathscr{S}_{/S'} \longrightarrow \mathscr{S}_{/S}. $$
Moreover, if we denote by $\mathscr{S}m_{/S}$ the subcategory of $\mathscr{S}_{/S}$ formed by the objects $p: X \rightarrow S$ such that $p$ is a $\mathbf{P}$-morphism (inspired by the algebraic geometry situation), then, it also induces a functor
    $$ f^{\ast}: \mathscr{S}m_{/S'} \longrightarrow \mathscr{S}m_{/S}, $$
(since $\mathbf{P}$-morphisms are stable by base change) which has a right adjoint
    $$ f_{\ast}: \mathscr{S}m_{/S} \longrightarrow \mathscr{S}m_{/S'} $$
(since $\mathbf{P}$-morphisms are stable by base change).

\end{aforisma}

\begin{lemma}
  Let $(C,J,\mathbf{P})$ be a geometric context and $(F_{i})_{i \in I}$ be a multiple of sheaves over the site $(C,J)$. Define
         $$ F =_{df} \coprod_{i \in I} F_{i}. $$
Then, for every $i \in I$, the canonical inclusion arrow
       $$ s_{i}: F_{i} \longrightarrow F $$
is an open immersion of sheaves.

\end{lemma}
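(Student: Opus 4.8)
The plan is to verify Definition~\ref{Sheaf open immersion} for each coprojection $s_{i}:F_{i}\to F$; equivalently, by Corollary~\ref{Open immersion = P-monomorphism}, to show that $s_{i}$ is a monomorphism and a $\mathbf{P}$-morphism of sheaves in the sense of~\ref{S-morphism of sheaves}. That $s_{i}$ is a monomorphism is immediate: in any topos coproducts are disjoint and universal, so every coprojection into $F=\coprod_{j\in I}F_{j}$ is a monomorphism (indeed $F_{i}\times_{F}F_{i}\cong F_{i}$, since $F_{j}\times_{F}F_{k}$ is initial for $j\neq k$), and monomorphisms are stable under base change; hence, for each $h_{U}\to F$ with $U\in Ob(C)$, the arrow $F_{i}\times_{F}h_{U}\to h_{U}$ is a monomorphism. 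Writing $V_{j}:=F_{j}\times_{F}h_{U}$, it therefore remains to produce, for every such $h_{U}\to F$, a family of $\mathbf{P}$-morphisms $(U_{k}\to U)_{k\in K}$ in $C$ with $\mathrm{Im}\!\big(\coprod_{k}h_{U_{k}}\to h_{U}\big)\cong V_{i}=\mathrm{Im}(F_{i}\times_{F}h_{U}\to h_{U})$.

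By universality of coproducts in the topos $Sh(C,J)$ there is a canonical isomorphism $\coprod_{j\in I}V_{j}\xrightarrow{\ \sim\ }h_{U}$, so the $V_{j}$ are pairwise disjoint subsheaves of $h_{U}$ with union $h_{U}$, and in particular $V_{i}$ is a complemented subsheaf of $h_{U}$. This isomorphism is a sheaf epimorphism, hence the canonical map from the presheaf coproduct $\coprod_{j}^{\mathrm{pre}}V_{j}$ to $h_{U}$ is a local epimorphism; evaluating at $Id_{U}\in h_{U}(U)$ produces a $J$-covering $(\rho_{a}:U_{a}\to U)_{a\in A}$ and a function $a\mapsto j(a)$ such that each $h_{\rho_{a}}:h_{U_{a}}\to h_{U}$ factors through $V_{j(a)}\hookrightarrow h_{U}$ (so $h_{U_{a}}\to h_{U}\to F$ factors through $F_{j(a)}$). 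I would then use that $J$ is $\mathbf{P}$-generated (GC5) --- to refine $(\rho_{a})_{a\in A}$ into a covering by $\mathbf{P}$-morphisms --- together with the fact that $\mathbf{P}$ is locally cartesian (GC6) --- to pull the decomposition $h_{U}\cong\coprod_{j}V_{j}$ back, inside $C$, along the $\mathbf{P}$-morphisms of the refinement and re-split it --- so as to arrive at a $J$-covering $(\varphi_{k}:W_{k}\to U)_{k\in K}$ by $\mathbf{P}$-morphisms for which each $h_{\varphi_{k}}$ factors through a single summand $V_{j(k)}$. The subfamily $\{\varphi_{k}:j(k)=i\}$ then consists of $\mathbf{P}$-morphisms into $U$ all factoring through $V_{i}$, and since the $V_{j}$ are pairwise disjoint with $\coprod_{j}V_{j}=h_{U}$, the image of $\coprod_{k:\,j(k)=i}h_{W_{k}}\to h_{U}$ is exactly $V_{i}$. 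By the lemma characterising open immersions into a representable (the one preceding Proposition~\ref{Open immersion equivalence}), this shows $V_{i}\hookrightarrow h_{U}$ is an open immersion; as $h_{U}\to F$ was arbitrary, Proposition~\ref{Stability properties of open immersions}(3) then gives that $s_{i}$ is an open immersion, and the displayed family also verifies Definition~\ref{Sheaf open immersion} directly.

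The main obstacle I anticipate --- everything else being a formal consequence of the Giraud axioms and the definitions --- is the passage to a $\mathbf{P}$-covering of $U$ compatible with the decomposition $h_{U}\cong\coprod_{j}V_{j}$: the local-epimorphism property yields a compatible $J$-covering $(\rho_{a})$ whose members need not lie in $\mathbf{P}$, whereas the $\mathbf{P}$-refinement supplied by (GC5) need not respect the decomposition, a refining $\mathbf{P}$-morphism possibly ``straddling'' several summands $V_{j}$. Reconciling the two --- pulling the decomposition back along $\mathbf{P}$-morphisms (which is legitimate in $C$ once one shrinks them to cartesian representatives via (GC6)), re-splitting locally, and verifying that this can be organised so that the resulting $\mathbf{P}$-covering lands, locally on $U$, inside a single summand --- is where the axioms (GC4)--(GC6) are genuinely used, and I expect this to be the delicate part of the argument; the reduction to it, and the concluding appeal to the characterisation of open immersions into representables, are routine.
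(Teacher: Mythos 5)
Your strategy is the same as the paper's: fix $h_{U} \rightarrow F$, use the description of sections of a sheaf coproduct to produce a $J$-covering $(\rho_{a}: U_{a} \rightarrow U)_{a \in A}$ along which $h_{U} \rightarrow F$ factors through single summands, identify the image of the subfamily indexed by $\{a : j(a)=i\}$ with $V_{i} = F_{i} \times_{F} h_{U}$, and then try to replace the covering by one consisting of $\mathbf{P}$-morphisms via (GC5) of (\ref{Geometric context}). Your treatment of the monomorphism part (disjointness and universality of coproducts in a topos) is correct and more explicit than the paper's, which relegates it to a footnote, and your disjointness argument identifying $\mathrm{Im}\bigl(\coprod_{k:\,j(k)=i} h_{W_{k}} \rightarrow h_{U}\bigr)$ with $V_{i}$ is sound. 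The difference is that the paper dispatches the passage to a $\mathbf{P}$-covering in one line (``applying (GC5), we can suppose, without any loss of generality, that each $\varphi_{a}$ is a $\mathbf{P}$-morphism''), whereas you correctly single it out as the delicate point --- but you only announce a plan for it rather than carrying it out.

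That announced plan does not close as described, so there is a genuine gap. (GC5), as literally stated, produces a family $(\psi_{k}: W_{k} \rightarrow U)$ of $\mathbf{P}$-morphisms through which each $\rho_{a}$ \emph{factors}, i.e.\ a coarser family; the property ``lands in a single summand $V_{j}$'' passes from a morphism to anything precomposed with it (since each $V_{j}$ is a subfunctor of $h_{U}$, hence a sieve), but not to the coarser $\psi_{k}$. Your proposed repair --- re-split each $W_{k}$ using the coproduct description of $h_{U}(W_{k})$ and (GC6) --- yields another $J$-covering of $W_{k}$ that must again be pushed into $\mathbf{P}$ by (GC5), and the same obstruction recurs; the regress does not terminate. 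The step does become immediate if (GC5) is read in the opposite direction (the covering sieve generated by $(\rho_{a})$ \emph{contains} a covering family of $\mathbf{P}$-morphisms): then each new $\psi_{k}$ factors through some $\rho_{a}$, hence lies in the sieve $V_{j(a)}$, and your disjointness argument finishes the proof. So either you must prove the compatibility under the literal (GC5), or observe --- as your hesitation implicitly does --- that the lemma (and the paper's own ``without loss of generality'') really relies on the finer-refinement reading of (GC5). Everything else in your proposal (the reduction, the image computation, the appeal to Proposition (\ref{Stability properties of open immersions})) is correct and matches the paper.
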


\begin{proof}
 Let $s:h_{U} \rightarrow F$ be a morphism of sheaves where $U \in Ob(C)$ and consider the canonical inclusion $s_{j}: F_{j} \rightarrow F$ for a fixed $j \in I$. By the definition of coproducts of sheaves, there exists a $J$-covering $(\varphi_{a}: U_{a} \rightarrow U)_{a \in A}$ such that, for every $a \in A$, there is a commutative square of the form:
  \[
  \xymatrix{
      h_{U_{a}} \ar[d]_{s_{ia}} \ar[r]^{h_{\varphi_{a}}}   &       h_{U} \ar[d]^{s} \\
      F_{i} \ar[r]_{s_{i}}      &       F,
  }
  \]
for some $i \in I$ and some morphism $s_{ia}: h_{U_{a}} \rightarrow F_{i}$. Hence, we can define the subset $A_{j}$ of $A$ composed only by the indices $a \in A$ for which the above condition is satisfied for $j$, i.e., the indices $a \in A$ for which there exists a commutative square of the form
 \[
  \xymatrix{
      h_{U_{a}} \ar[d]_{s_{ja}} \ar[r]^{h_{\varphi_{a}}}   &       h_{U} \ar[d]^{s} \\
      F_{j} \ar[r]_{s_{j}}      &       F.
  }
  \]
With the previous notations, we have a canonical arrow
 $$ p_{j}: \coprod_{i \in A_{j}} h_{U_{a}} \longrightarrow h_{U}, $$
and we can verify easily that the image of $p_{j}$ is isomorphic to the image of the evident induced monomorphism $F_{j} \times_{F} h_{U} \rightarrow h_{U}$ \footnote{We recall that the arrow $s_{j}: F_{j} \rightarrow F$ is a monomorphism and monomorphisms are stable by base change.}. Now, applying (GC5) of (\ref{Geometric context}), we can suppose, without any lost of generality, that each one of the arrows $\varphi_{a}: U_{a} \rightarrow U$ are $\mathbf{P}$-morphisms, which implies that $s_{j}: F_{j} \rightarrow F$ is actually a open immersions of sheaves.

\end{proof}

\begin{proposition}\label{Finite producrs and coproducts of schemes}
 Any schematic category admits finite products and arbitrary small coproducts.

\end{proposition}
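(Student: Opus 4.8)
The statement has two parts: arbitrary small coproducts and finite products. I will handle the coproducts first since the preceding lemma does most of the work, and then treat finite products by reduction to the affine case via the atlas machinery.

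\medskip

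\noindent\emph{Coproducts.} Let $(X_{\alpha})_{\alpha \in A}$ be a small family of elementary schemes in a geometric context $(C,J,\mathbf{P})$, and set $X =_{df} \coprod_{\alpha \in A} X_{\alpha}$, the coproduct taken in $Sh(C,J)$ (which exists since a topos is cocomplete). For each $\alpha$, choose an open atlas $p_{\alpha}: \coprod_{i \in I_{\alpha}} h_{U_{\alpha,i}} \longrightarrow X_{\alpha}$. Composing with the canonical inclusion $s_{\alpha}: X_{\alpha} \rightarrow X$, which is an open immersion of sheaves by the preceding lemma, and using that open immersions are stable by composition ((4) of \ref{Stability properties of open immersions}), each composite $h_{U_{\alpha,i}} \rightarrow X_{\alpha} \rightarrow X$ is an open immersion. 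Setting $J =_{df} \coprod_{\alpha \in A} I_{\alpha}$, the induced arrow $\coprod_{(\alpha,i) \in J} h_{U_{\alpha,i}} \rightarrow X$ is a coproduct of the epimorphisms $p_{\alpha}$ (each followed by the isomorphism-closed inclusion into the coproduct decomposition of $X$), hence an epimorphism of sheaves, since coproducts of epimorphisms are epimorphisms in any topos. Thus this arrow is an open atlas of $X$, so $X$ is an elementary scheme.

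\medskip

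\noindent\emph{Finite products.} It suffices to treat the binary product $X \times Y$ of two elementary schemes (the empty product is the terminal sheaf, which is representable by the terminal object of $C$, itself existing by (GC1)). Choose open atlases $p: \coprod_{i \in I} h_{U_{i}} \rightarrow X$ and $q: \coprod_{j \in J} h_{V_{j}} \rightarrow Y$. The plan is to build an atlas of $X \times Y$ out of the representables $h_{U_{i}} \times h_{V_{j}} \cong h_{U_{i} \times V_{j}}$ (using (GC1) and the fact that the Yoneda embedding preserves finite products). Form the product of the two atlas maps:
\[
  p \times q: \Bigl(\coprod_{i \in I} h_{U_{i}}\Bigr) \times \Bigl(\coprod_{j \in J} h_{V_{j}}\Bigr) \longrightarrow X \times Y.
\]
Since finite products distribute over coproducts in a topos, the source is $\coprod_{(i,j) \in I \times J} h_{U_{i} \times V_{j}}$. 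The map $p \times q$ is an epimorphism because epimorphisms in a topos are stable under base change and closed under composition: $p \times q$ factors as $(p \times \mathrm{id}_{Y}) \circ (\mathrm{id} \times q)$, and $p \times \mathrm{id}_{Y}$ (resp. $\mathrm{id} \times q$) is a base change of $p$ (resp. $q$). To see that each component $h_{U_{i} \times V_{j}} \rightarrow X \times Y$ is an open immersion, note it equals $p_{i} \times q_{j}$ where $p_{i}: h_{U_{i}} \rightarrow X$ and $q_{j}: h_{V_{j}} \rightarrow Y$ are open immersions; writing $p_{i} \times q_{j} = (p_{i} \times \mathrm{id}) \circ (\mathrm{id} \times q_{j})$ exhibits it as a composite of two base changes of open immersions, hence an open immersion by stability under base change and composition (\ref{Stability properties of open immersions}). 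Therefore $\coprod_{(i,j)} h_{U_{i} \times V_{j}} \rightarrow X \times Y$ is an open atlas, and $X \times Y$ is an elementary scheme.

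\medskip

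\noindent\emph{Main obstacle.} The technical heart is verifying that the canonical map out of the coproduct of representables is genuinely an \emph{epimorphism of sheaves} in each case — for products this rests on the distributivity of finite limits over colimits in a topos together with the base-change stability of epimorphisms, both recalled in the excerpt. A secondary point requiring care is the identification $h_{U_{i}} \times h_{V_{j}} \cong h_{U_{i} \times V_{j}}$ and that the Yoneda/sheafification embedding respects these finite products, which follows since $h$ preserves all limits that exist in $C$ and the inclusion $Sh(C,J) \hookrightarrow \widehat{C}$ creates limits; I would state this explicitly but not belabor it. Everything else is a routine assembly of the stability properties of $Ouv$ proved in \ref{Stability properties of open immersions} and the coproduct lemma immediately preceding.
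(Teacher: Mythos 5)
Your proof is correct and follows essentially the same route as the paper: coproducts are handled via the preceding lemma showing the canonical inclusions $X_{\alpha} \rightarrow \coprod_{\alpha} X_{\alpha}$ are open immersions, and finite products via the product atlas $\coprod_{(i,j)} h_{U_{i} \times V_{j}} \rightarrow X \times Y$ obtained from (GC1). You in fact supply more detail than the paper, which merely asserts that $p \times q$ is an open atlas; your verification that it is an epimorphism and that its components are open immersions, by factoring each through base changes along the projections, is exactly the right way to fill that in.
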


\begin{proof}
  Let $\mathscr{S}$ be a schematic category and suppose that $(C,J,\mathbf{P})$ is a geometric context for $\mathscr{S}$, i.e., $\mathscr{S}$ is equivalent to the category of elementary schemes in $(C,J,\mathbf{P})$. Then, we just have to proof that $Sch(C,J,\mathbf{P})$ admits finite products and arbitrary small coproducts. We staring by proving that $Sch(C,J,\mathbf{P})$ admits finite products. Since the category $C$ admits finite products, it admits a terminal object $pt$, and since every local model is an elementary scheme, $pt$ is also a terminal object in the category $Sch(C,J,\mathbf{P})$. Now, if $X$ and $Y$ are two elementary schemes in $(C,J,\mathbf{P})$, consider an open atlas
      $$ p: \coprod_{i \in I} h_{U_{i}} \longrightarrow X $$
for $X$ and an open atlas
      $$ q: \coprod_{j \in J} h_{V_{j}} \longrightarrow Y $$
for $Y$. Then, we can form an open atlas
     $$ p \times q: \coprod_{(i,j) \in I \times J} h_{U_{i,j}} \longrightarrow X \times Y $$
where $U_{i,j} = U_{i} \times U_{j}$ in $C$, which proves that $\mathscr{S}$ is stable by finite products. To prove that $\mathscr{S}$ also admits arbitrary small coproducts is enough to verify that, if $(X_{i})_{i \in I}$ is a family of elementary schemes, then $\coprod_{i \in I} X_{i}$ is also an elementary scheme. Let
      $$ X =_{df} \coprod_{i \in I} X_{i}. $$
For each $i \in I$, there exists an open atlas 
      $$ p_{i}: \coprod_{a \in A_{i}} h_{U_{i,a}} \longrightarrow X_{i}, $$
and we have from the previous lemma that the canonical inclusions $s_{i}: X_{i} \rightarrow X$ are all open immersions, which implies that the composed arrows $p'_{i,a}: h_{U_{i,a}} \rightarrow X$, given by $p'_{i,a}= s_{i} \circ p_{i,a}$, are also all open immersions (because open immersions are stable by composition). Hence, with the notation
    $$ A = \coprod_{i \in I} A_{i} $$
we have for each $\alpha \in A$, an open immersion $p_{\alpha}: h_{U_{\alpha}} \rightarrow X$ \footnote{Regard that the elements $\alpha$ of $A$ are pairs $(i,a)$, where $i \in I$ and $a \in A_{i}$, and, by definition, $p_{\alpha} = p'_{i,a}$.}, from where we deduce the existence of an open atlas    
    $$ p: \coprod_{\alpha \in A} h_{U_{\alpha}} \longrightarrow X $$
and conclude that $\mathscr{S}$ is actually stable by arbitrary small coproducts.

\end{proof}

\begin{remark}
  In general, a schematic category is neither complete or co-complete, and may not even be finitely complete. For example, it's well known that the category of differential manifolds (which is a schematic category) does not admits arbitrary finite limits. The representability of limits and colimits of certain type in a schematic category depends on the category of local models, since these categories are locally defined (if we consider the category $Sch$ of schemes in algebraic geometry, then the finite limits are proven to be representable in $Sch$). Yet, since every schematic category $\mathscr{S}$ is, by definition, a full subcategory of a topos $\mathcal{E}$, we can always compute into the topos $\mathcal{E}$, the limits and colimits of diagrams in $\mathscr{S}$. The formalism of geometric context is very flexible in this sense, since it maintains the category $\mathscr{S}$ of geometric objects of interest, but inside a topos $\mathcal{E}$, which is a category with plenty completion and exact properties. 

\end{remark}

The next proposition is not difficult to proof, but very laboriously. 

\begin{proposition}\label{Fibred products of schemes}
  Let $(C,J,\mathbf{P})$ be a geometric context and $\mathscr{S}$ be the category of elementary schemes in $(C,J,\mathbf{P})$. If $C$ admits fibred products, then $\mathscr{S}$ also admits fibred products.

\end{proposition}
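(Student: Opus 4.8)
The plan is to work throughout inside the ambient topos $\mathcal{E} = Sh(C,J)$, where all finite limits exist, and to prove that for any two morphisms of elementary schemes $X \rightarrow Z \leftarrow Y$ the fibred product $X \times_{Z} Y$ formed in $\mathcal{E}$ is again an elementary scheme; its universal property then automatically restricts to the full subcategory $\mathscr{S} \simeq Sch(C,J,\mathbf{P})$, which gives the statement. Two auxiliary facts will be used repeatedly. First, if $g: F \rightarrow G$ is an open immersion of sheaves and $G$ is an elementary scheme, then $F$ is an elementary scheme: indeed $g$ is schematic (as observed in (\ref{Schematic morphism digression})), so this is precisely the corollary following (\ref{Schematic lemma}). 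Second, since $C$ admits fibred products and fibred products of presheaves are computed pointwise (and $J$ is sub-canonical, so $Sh(C,J) \hookrightarrow \widehat{C}$ creates limits), one has $h_{U} \times_{h_{V}} h_{W} \cong h_{U \times_{V} W}$ in $Sh(C,J)$ for every cospan $U \rightarrow V \leftarrow W$ in $C$; in particular this fibred product is an elementary affine scheme.

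I would then climb a short ladder of special cases, each deduced from the previous one by a single application of (\ref{Schematic lemma}). Case A: for $U, V \in Ob(C)$ and an elementary scheme $B$ with maps $h_{U} \rightarrow h_{V}$ and $B \rightarrow h_{V}$, pick an open atlas $\coprod_{j} h_{W_{j}} \rightarrow B$ and apply (\ref{Schematic lemma}) to the projection $h_{U} \times_{h_{V}} B \rightarrow B$; by the pasting law each pullback $(h_{U} \times_{h_{V}} B) \times_{B} h_{W_{j}} \cong h_{U} \times_{h_{V}} h_{W_{j}}$ is an elementary scheme by the base case, so $h_{U} \times_{h_{V}} B$ is an elementary scheme. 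Case B: for $V \in Ob(C)$ and elementary schemes $A, B$ with maps $A \rightarrow h_{V}$ and $B \rightarrow h_{V}$, pick an open atlas $\coprod_{i} h_{U_{i}} \rightarrow A$ and apply (\ref{Schematic lemma}) to the projection $A \times_{h_{V}} B \rightarrow A$; each pullback $(A \times_{h_{V}} B) \times_{A} h_{U_{i}} \cong h_{U_{i}} \times_{h_{V}} B$ is an elementary scheme by Case A, so $A \times_{h_{V}} B$ is an elementary scheme.

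For the general situation $X \rightarrow Z \leftarrow Y$ with $X, Y, Z$ elementary schemes, fix an open atlas $\coprod_{k} h_{V_{k}} \rightarrow Z$. Each chart $h_{V_{k}} \rightarrow Z$ is an open immersion, hence by stability of open immersions under base change ((2) of (\ref{Stability properties of open immersions})) the projections $X_{k} := X \times_{Z} h_{V_{k}} \rightarrow X$ and $Y_{k} := Y \times_{Z} h_{V_{k}} \rightarrow Y$ are open immersions, so $X_{k}$ and $Y_{k}$ are elementary schemes by the first auxiliary fact; moreover the pasting law yields $(X \times_{Z} Y) \times_{Z} h_{V_{k}} \cong X_{k} \times_{h_{V_{k}}} Y_{k}$, which is an elementary scheme by Case B. Applying (\ref{Schematic lemma}) to the canonical projection $X \times_{Z} Y \rightarrow Z$ together with the atlas $\coprod_{k} h_{V_{k}} \rightarrow Z$ then shows that $X \times_{Z} Y$ is an elementary scheme, which completes the argument.

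I do not expect a genuine conceptual obstacle; the difficulty is organisational. At each rung one must choose the correct leg of the fibred product along which to invoke (\ref{Schematic lemma}), and then check the pullback identifications $(h_{U} \times_{h_{V}} B) \times_{B} h_{W_{j}} \cong h_{U} \times_{h_{V}} h_{W_{j}}$, $(A \times_{h_{V}} B) \times_{A} h_{U_{i}} \cong h_{U_{i}} \times_{h_{V}} B$, and $(X \times_{Z} Y) \times_{Z} h_{V_{k}} \cong X_{k} \times_{h_{V_{k}}} Y_{k}$ — all routine consequences of the pasting law for cartesian squares. The one point worth isolating is that, in order to reduce the base $Z$ to a representable, one localises $X$ and $Y$ separately over each chart of $Z$ and uses that an open subsheaf of an elementary scheme is again an elementary scheme; granting that, everything else is bookkeeping of the kind already carried out in the proofs of (\ref{Schematic lemma}) and (\ref{Finite producrs and coproducts of schemes}).
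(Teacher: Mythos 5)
Your argument is correct, and every step it uses is already available in the paper: the reduction engine is (\ref{Schematic lemma}), the fact that an open subsheaf of an elementary scheme is an elementary scheme follows from (\ref{Schematic morphism digression}) together with the corollary to (\ref{Schematic lemma}), and the base case $h_{U} \times_{h_{V}} h_{W} \cong h_{U \times_{V} W}$ is where the hypothesis on $C$ enters, exactly as it should. The difference from the paper's sketch is one of organisation. The paper first reduces to the case where the two \emph{legs} are representable and the base $X$ is a general elementary scheme, and then still has to cover $X$ by an atlas and glue the resulting pieces, for which it defers to an argument of To\"{e}n--Vaqui\'{e}. You instead reduce the \emph{base} to a representable first (by localising $X$ and $Y$ over the charts of an atlas of $Z$ and using the identity $(X \times_{Z} Y) \times_{Z} h_{V_{k}} \cong X_{k} \times_{h_{V_{k}}} Y_{k}$), and only then strip the legs down to representables one at a time. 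The payoff is that the final gluing step, which in the paper is outsourced to [10], becomes in your version just one more application of (\ref{Schematic lemma}); the whole proof is self-contained and every rung of the ladder is the same one-line pattern. The only points worth writing out in full are the three pasting-law identifications you list and the observation that the cospans appearing in Cases A and B are induced, via the full faithfulness of Yoneda, from cospans in $C$, so that the representability of the bottom fibred product really does follow from the hypothesis on $C$.
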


\emph{Sketch of the proof} - The strategy to proof the proposition is to apply (\ref{Schematic lemma}). Indeed, it follows from (\ref{Schematic lemma}) that it is enough to proof the statement of the proposition to the cartesian squares of the form
\[
\xymatrix{
  F \ar[r]^{f'} \ar[d]_{g'}   &   h_{U} \ar[d]^{g} \\
  h_{U'} \ar[r]_{f}       &   X
}
\]
in $Sh(C,J)$, with $U,U' \in Ob(C)$ and $X$ being elementary scheme. In this case, we can choose an open atlas
   $$ p: \coprod_{i \in I} h_{V_{i}} \longrightarrow X $$
of $X$, and, for every $i \in I$, we can form the Cartesian squares
\[
\xymatrix{
 h_{V_{i}} \times_{X} h_{U} \ar[r]^{p_{i}'} \ar[d]_{g_{i}}  &   h_{U} \ar[d]^{g} \\
 h_{V_{i}}          \ar[r]_{p_{i}}                 &   X
}
\]
and
\[
\xymatrix{
 h_{V_{i}} \times_{X} h_{U'} \ar[r]^{p_{i}''} \ar[d]_{f_{i}}  &   h_{U'} \ar[d]^{f} \\
 h_{V_{i}}          \ar[r]_{p_{i}}                 &   X
}
\]
 where $p_{i}'$ and $p_{i}''$ are open immersions (since open immersions are stable by base change). Then, $h_{V_{i}} \times_{X} h_{U}$ and $h_{V_{i}} \times_{X} h_{U'}$ are elementary schemes, and defining respectively open atlases for $h_{V_{i}} \times_{X} h_{U}$ and $h_{V_{i}} \times_{X} h_{U'}$, we derive the existence of open atlases $(h_{U'_{i}} \rightarrow h_{U})_{i \in I}$ and $(h_{U_{i}} \rightarrow h_{U})_{i \in I}$ \footnote{Using the stability by base change and the stability by compositions of epimorphisms, and also the stability by compositions of open immersions.}. With the previous notations, we can now reply an argument analogous to the one exposed in the page 23 of [10] to conclude the proof.

\begin{corollary}
 Under the hypothesis of the proposition (\ref{Fibred products of schemes}), every morphism between elementary schemes is schematic according to (\ref{Schemeatic morphism}).

\end{corollary}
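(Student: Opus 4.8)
The plan is to reduce the statement directly to Proposition \ref{Fibred products of schemes}. Fix a geometric context $(C,J,\mathbf{P})$ in which $C$ admits fibred products, and let $f: X \rightarrow Y$ be an arbitrary morphism between elementary schemes in this context. By Definition \ref{Schemeatic morphism}, in order to show that $f$ is schematic it suffices to prove that for every morphism of sheaves $h_{U} \rightarrow Y$, with $U \in Ob(C)$, the fibred product $X \times_{Y} h_{U}$, computed in the topos $Sh(C,J)$, is an elementary scheme.

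First I would observe that all three sheaves $X$, $Y$ and $h_{U}$ are elementary schemes: $X$ and $Y$ are so by hypothesis, and $h_{U}$ is representable, hence an elementary scheme by the remarks following Definition \ref{Elementary scheme, Geometric sheaf}. Since the hypothesis of Proposition \ref{Fibred products of schemes} is in force, namely that $C$ admits fibred products, the category $\mathscr{S} = Sch(C,J,\mathbf{P})$ is closed under fibred products; because $\mathscr{S}$ is a full subcategory of the topos $Sh(C,J)$ and the fibred products produced in Proposition \ref{Fibred products of schemes} are those inherited from the ambient topos, the sheaf $X \times_{Y} h_{U}$ lies in $\mathscr{S}$, i.e., it is an elementary scheme.

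As the morphism $h_{U} \rightarrow Y$ was arbitrary, this shows that $f$ satisfies the condition of Definition \ref{Schemeatic morphism}, so $f$ is schematic; and since $f$ was an arbitrary morphism between elementary schemes, the corollary follows.

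There is essentially no real obstacle here: the argument is pure bookkeeping built on two facts already established in the excerpt, namely that representable sheaves are elementary schemes and that the ``fibred products of schemes'' of Proposition \ref{Fibred products of schemes} coincide with the fibred products in the ambient topos. The only subtlety worth flagging is not to conflate the existence of the fibred product in $\mathscr{S}$ with its being computed differently than in $Sh(C,J)$ — but the construction underlying Proposition \ref{Fibred products of schemes} shows that the topos-level fibred product of elementary schemes is again an elementary scheme, which is exactly what Definition \ref{Schemeatic morphism} requires.
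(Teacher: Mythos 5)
Your proof is correct and follows exactly the argument the paper intends for this corollary: since $h_{U}$ is representable and hence an elementary scheme, the sketch of Proposition (\ref{Fibred products of schemes}) shows that the topos-level fibred product $X \times_{Y} h_{U}$ of elementary schemes is again an elementary scheme, which is precisely the condition of Definition (\ref{Schemeatic morphism}). The subtlety you flag — that the fibred product in $\mathscr{S}$ is the one inherited from $Sh(C,J)$ — is indeed the only point needing care, and it is settled by the way the proposition is actually proved.
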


The next theorem characterises the elementary schemes as a certain type of colimit in the category of sheaves in a geometric context, and is inspired from [10]:

\begin{theorem}\label{Characterization of elementary schemes}
 (To\"{e}n-Vaqui\'{e}) Let $(C,J,\mathbf{P})$ be a geometric context and $F$ be a sheaf over the site $(C,J)$. Equivalent conditions:
  \begin{enumerate}
    \item $F$ is an elementary scheme.
    \item There exists an equivalence relation $(R,X, r_{1},r_{2})$ in $Sh(C,J)$ \footnote{A brief definition of equivalence relation in a topos, as well it's main exactness properties, are exposed in the preliminary chapter of [9].} such that the following conditions are verified:
    \begin{enumerate}
      \item $X \cong \coprod_{i \in I} h_{U_{i}}$, where $U_{i} \in Ob(C)$ for every $i \in I$;
      \item For every pair $(i,j)$ of indices if $I$, there exists a sunfunctor $R_{ij} \hookrightarrow X_{i} \times X_{j}$ where the squares
      \[
      \xymatrix{
         R_{ij} \ar[rr]^{(r_{i},r_{j})} \ar[d]    &   &   h_{U_{i}} \times h_{U_{j}} \ar[d] \\
         R \ar[rr]_{(r_{1},r_{2})}                &   &  X \times X
      }
      \]
 are cartesian and the arrows $R_{ij} \rightarrow h_{U_{i}}$ are open immersions;  
      \item For every $i \in I$, the inclusion $R_{ii} \hookrightarrow h_{U_{i}} \times h_{U_{i}}$ coincides with the diagonal morphism;
      \item $F$ is a quotient $X/R$ of the the equivalence relation $(R,X,r_{1},r_{2})$.
      
    \end{enumerate}
    
  \end{enumerate}

\end{theorem}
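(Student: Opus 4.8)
The plan is to prove the two implications separately, with the exactness of the ambient topos $Sh(C,J)$ (effectivity of equivalence relations, coequaliser property of an epimorphism over its kernel pair, stability of epimorphisms under base change) as the main structural input, together with the stability properties of open immersions of \ref{Stability properties of open immersions}, the fact that open immersions are monomorphisms (\ref{Open immersions are mono}), and the characterisation \ref{Open immersion = P-monomorphism}.

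For $(1)\Rightarrow(2)$, starting from an open atlas $p:X=\coprod_{i\in I}h_{U_{i}}\to F$, I would take $(R,X,r_{1},r_{2})$ to be the kernel pair of $p$, that is $R=X\times_{F}X$ with $r_{1},r_{2}$ the two projections. Kernel pairs are always equivalence relations, and since $Sh(C,J)$ is exact and $p$ is an epimorphism, $p$ realises $F$ as $X/R$, so conditions (a) and (d) hold by construction. For (b) one computes $R_{ij}=R\times_{X\times X}(h_{U_{i}}\times h_{U_{j}})\cong h_{U_{i}}\times_{F}h_{U_{j}}$, and the map $R_{ij}\to h_{U_{i}}$ is the base change of the local chart $p_{j}\colon h_{U_{j}}\to F$ along $p_{i}$; since $p_{j}$ is an open immersion and open immersions are stable under base change (item (2) of \ref{Stability properties of open immersions}), $R_{ij}\to h_{U_{i}}$ is an open immersion, and the square defining $R_{ij}$ is cartesian by construction. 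For (c), $R_{ii}\cong h_{U_{i}}\times_{F}h_{U_{i}}$, and because $p_{i}$ is a monomorphism (by \ref{Open immersions are mono}) this pullback is $h_{U_{i}}$ via the diagonal, so $R_{ii}\hookrightarrow h_{U_{i}}\times h_{U_{i}}$ is the diagonal inclusion.

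For $(2)\Rightarrow(1)$, given the equivalence relation with properties (a)--(d), the quotient map $q\colon X=\coprod_{i\in I}h_{U_{i}}\to F$ is an epimorphism, and by exactness $R\cong X\times_{F}X$. I would then show that each component $q_{i}\colon h_{U_{i}}\to F$ is an open immersion, which exhibits $q$ as an open atlas. That $q_{i}$ is a monomorphism follows from (c), since $h_{U_{i}}\times_{F}h_{U_{i}}\cong R_{ii}$ is the diagonal of $h_{U_{i}}$. To see that $q_{i}$ is moreover a $\mathbf{P}$-morphism of sheaves — hence, by \ref{Open immersion = P-monomorphism}, an open immersion — I would verify Definition \ref{S-morphism of sheaves} with $S=\mathbf{P}$ for a test arrow $h_{W}\to F$: pulling back $q$ gives an epimorphism $\coprod_{j}(h_{U_{j}}\times_{F}h_{W})\to h_{W}$, which, being a sheaf epimorphism onto a representable, yields after refinement by a $\mathbf{P}$-covering via (GC5) a family $(W_{b}\to W)_{b}$ of $\mathbf{P}$-morphisms such that each $h_{W_{b}}\to h_{W}\to F$ factors through some $q_{j(b)}\circ\beta_{b}$ with $\beta_{b}\colon h_{W_{b}}\to h_{U_{j(b)}}$. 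Then $h_{U_{i}}\times_{F}h_{W_{b}}\cong R_{i,j(b)}\times_{h_{U_{j(b)}}}h_{W_{b}}$, and using (b) together with the symmetry of $R$ (which gives that $R_{ij}\to h_{U_{j}}$ is also an open immersion) plus stability of open immersions under base change, $h_{U_{i}}\times_{F}h_{W_{b}}\to h_{W_{b}}$ is an open immersion; thus, locally on $W$, the monomorphism $h_{U_{i}}\times_{F}h_{W}\to h_{W}$ is an open immersion.

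The \emph{main obstacle} is the passage from this local statement to the conclusion that $h_{U_{i}}\times_{F}h_{W}\to h_{W}$ is itself an open immersion and that $h_{U_{i}}\times_{F}h_{W}$ carries an open atlas whose structure maps are $\mathbf{P}$-morphisms in $C$. This rests on two auxiliary facts, and these are the laborious part. First, being an open immersion onto a representable is a property local on the target, which one obtains by unwinding \ref{Sheaf open immersion} and using that monomorphisms, images, and $\mathbf{P}$-coverings all commute with base change; this lets us check the condition on the covering $(W_{b}\to W)_{b}$. Second, an open subsheaf $G\hookrightarrow h_{W}$ is an elementary scheme — here one cannot directly use the $\mathbf{P}$-morphisms supplied by the lemma preceding \ref{Open immersion equivalence}, since they need not be monomorphisms, so one refines them to $\mathbf{P}$-monomorphisms by means of (GC6) (locally cartesian) and (GC5), making the resulting charts $h_{V_{d}}\to G$ simultaneously monomorphisms and $\mathbf{P}$-morphisms of sheaves, hence open immersions by \ref{Open immersion = P-monomorphism}. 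Granting these, one glues the open atlases of the $h_{U_{i}}\times_{F}h_{W_{b}}$ over the covering $(W_{b}\to W)_{b}$ by invoking \ref{Schematic lemma} to produce the required open atlas of $h_{U_{i}}\times_{F}h_{W}$, which finishes the verification that $q_{i}$ is an open immersion and therefore that $F=X/R$ is an elementary scheme. The overall shape of this last argument parallels the one on page 23 of [10].
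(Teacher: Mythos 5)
Your proposal is correct and follows essentially the same route as the paper: for $(1)\Rightarrow(2)$ you take the kernel pair of the atlas, which is exactly the paper's choice $R=\coprod_{(i,j)}h_{U_{i}}\times_{F}h_{U_{j}}$ (the same object, since coproducts in a topos are stable under pullback), and you verify (a)--(d) by exactness of the topos and stability of open immersions, just as the paper's footnote indicates. For $(2)\Rightarrow(1)$ the paper defers entirely to page 24 of [10]; your sketch fills in that argument along the same lines and correctly isolates the genuinely laborious points (locality of the open-immersion condition over a covering of a representable target, and refinement of the charts to $\mathbf{P}$-monomorphisms via (GC5) and (GC6)), so it supplies strictly more detail than the paper without departing from its approach.
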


\begin{proof}
  (1) $\implies$ (2): Suppose that $F$ is an elementary scheme. Then, there is at least one open atlas
          $$ p: \coprod_{i \in I} h_{U_{i}} \longrightarrow F $$
of $F$. Define
           $$ X =_{df} \coprod_{i \in I} h_{U_{i}} $$
and, for each pair $(i,j)$ of indices in $I$, define
           $$ R_{ij} =_{df} h_{U_{i}} \times_{F} h_{U_{j}}. $$
Then, define 
          $$ R =_{df} \coprod_{(i,j) \in I^{2}} R_{ij}. $$
With the previous notations, the reader can verify easily that the conditions (a), (b), (c) and (d) are satisfied \footnote{Note that the conditions (a), (c)  and (d) follow from the Giraud's axioms of a topos, while the condition (b) is a consequence of the stability properties of open immersions.}.  \\

(2) $\implies$ (1): This implication is essentially proved in the page 24 of [10].

\end{proof}

\begin{remark}
  The equivalence relation in the theorem (\ref{Characterization of elementary schemes}) is defined only in terms of small coproducts and finite products, which are always representable in any schematic category (from (\ref{Finite producrs and coproducts of schemes})). In fact, the theorem (\ref{Characterization of elementary schemes}) provides a prescription of how the objects in any schematic category are derived from the affine elementary schemes in terms of coproducts, finite products, and open immersions \footnote{Note that the sheaves $R_{ij}$ in the theorem (\ref{Characterization of elementary schemes}) are elementary schemes, since the structural arrows $R_{ij} \rightarrow h_{U_{i}}$ are open immersions, and from (\ref{Schematic morphism digression}), open immersions are schematic}, being a first step for a possible intrinsic (axiomatic) characterisation of schematic categories.

\end{remark}

\newpage

\begin{large}
\textbf{References} \\

\end{large} 

[1] M. Artin, A. Grothendieck, J.-L. Verdier - \emph{Th\'{e}orie des topos et cohomologie \'{e}tale des sch\'{e}mas} (SGA4), Lect. Notes in Math., vol. 269, 270, 305, Springer-Verlag, 1972-73 \\

[2]. A. Grothendieck and J. Dieudonn\'{e}, \emph{\'{E}lements de g\'{e}om\'{e}trie alg\'{e}brique}, Publ. Math. IHES 4(1960) 8(1961) 11(1961) 17(1963) 20(1964) 24(1965) 28(1966) 32(1967) \\

[3]. Jean-Pierre Serre, \emph{Faisceaux alg\'{e}briques coh\'{e}rents}, Ann.of Math. (2) 61, (1955). 192-278. \\

[4]. S\'{e}minaire CHEVALLEY, 1956-1958: \emph{Classification des groupes de Lie alg\'{e}briques}, Fssc. 1 et 2 - Paris, S\'{e}cr\'{e}tariat math\'{e}matique, 1958 \\

[5]. M. Demazure, P. Gabriel. \emph{Groupes Alg\'{e}briques. Tome I: G\'{e}om\'{e}trie Alg\'{e}brique, g\'{e}n\'{e}ralit\'{e}s, groups commutatifs}. Masson, Amsterdam: North Holland, 1970.\\

[6]. M. Demazure, \emph{Lectures on p-divisible groups}. Lectures notes in Mathematics, 302, Berlin: Springer-Verlag, 1972, 1986 \\

[7]. M. Kashiwara, P. Schapira: \emph{Categories and sheaves}, Grundlehren der Mathematischen Wissenschaften, \textbf{332} Springer, 2006. \\

[8]. H. Schubert, \emph{Categories}, Springer-Verlag, Berlin-Heidelberg-New-York, 1972. \\ 

[9]. Johnstone, P. T., \emph{Topos theory}, Academic Press (1977). \\

[10]. B. To\"{e}n and M. Vaqui\'{e}, \emph{Au-dessous de spec($\mathbb{Z}$)},. J. K-Theory 3 (2009) 437-500.\\

[11]. B. To\"{e}n and M. Vaqui\'{e}, \emph{Alg\'{e}brisation des vari\'{e}t\'{e}s analytiques complexes et cat\'{e}gories d\'{e}riv\'{e}es}, Math. Ann. 342 (2008) 789-831. \\

[12]. Vaquié, M. (n.d.). \emph{Sheaves and Functors of Points}. In (pp. 407-461). doi:10.1017/9781108854429.011 \\

[13]. P. Feit, \emph{Axiomatization of passage from local structure to global object}, Mem. Amer. Math. Soc. \textbf{101} (1993). \\

[14]. Low, Z. L. (2016). \emph{Categories of spaces built from local models} (Doctoral thesis). https://doi.org/10.17863/CAM.384) \\

[15]. B. To\"{e}n, G. Vezzosi, \emph{Homotopical Algebraic Geometry II: Geometric Stacks and applications}, Mem. Amer. Math. Soc. \textbf{193} (2) (2008). \\

[16]. Jean Giraud, \emph{Cohomologie non ab\'{e}lienne}, Springer (1971)

\end{document}